\title{Second homotopy and invariant geometry of flag manifolds}
\author{Lino Grama\footnote{Departamento de Matemática, UNICAMP - Universidade Estadual de Campinas - SP, lino@ime.unicamp.br} \, and \, 
Lucas Seco\footnote{Departamento de Matemática, UnB - Universidade Federal de Brasília - DF, lseco@unb.br}
}
\tikzset{
         isom/.style={every to/.append style={edge node={node [sloped, allow upside down, auto=false]{$^\sim$}}}}}
\newcommand{\sen}{\,{\rm sin}}
\newcommand{\tg}{\,{\rm tan}}
\newcommand{\Ad}{\mathrm{Ad}}
\newcommand{\g}{\mathfrak{g}}
\newcommand{\h}{\mathfrak{h}}
\renewcommand{\u}{\mathfrak{u}}
\newcommand{\su}{\mathfrak{su}}
\renewcommand{\t}{\mathfrak{t}}
\newcommand{\F}{\mathbb F}
\newcommand{\T}{\Theta}
\newcommand{\R}{\mathbb{R}}
\newcommand{\C}{\mathbb{C}}
\renewcommand{\H}{\mathbb{H}}
\newcommand{\Z}{\mathbb{Z}}
\newcommand{\ad}{\mathrm{ad}}
\newcommand{\ds}{\displaystyle}
\newcommand{\m}{\mathfrak{m}}
\newcommand{\ov}[1]{{\overline{#1}}}
\newcommand{\wt}[1]{{\widetilde{#1}}}
\newtheorem{theorem}{Theorem}[section]
\newtheorem{lemma}[theorem]{Lemma}
\newtheorem{proposition}[theorem]{Proposition}
\newtheorem{corollary}[theorem]{Corollary}
\newtheorem{example}[theorem]{Example}
\newtheorem{remark}[theorem]{Remark}
\theoremstyle{definition}
\newtheorem{definition}[theorem]{Definition}
\renewcommand{\u}{\mathfrak{u}}
\renewcommand{\t}{\mathfrak{t}}
\renewcommand{\m}{\mathfrak{m}}
\newcommand{\ii}{{\rm \bf i}}
\renewcommand{\j}{{\rm \bf j}}
\renewcommand{\k}{{\rm \bf k}}
\def\prod#1{\langle{#1}\rangle}
\DeclareFontFamily{U}{mathx}{\hyphenchar\font45}
\DeclareFontShape{U}{mathx}{m}{n}{
      <5> <6> <7> <8> <9> <10>
      <10.95> <12> <14.4> <17.28> <20.74> <24.88>
      mathx10
      }{}
\DeclareSymbolFont{mathx}{U}{mathx}{m}{n}
\DeclareMathAccent{\widecheck}{0}{mathx}{"71}
\DeclareMathAccent{\wideparen}{0}{mathx}{"75}
\begin{document}

\maketitle

\begin{abstract}
We use the Hopf fibration to explicitly compute generators of the second homotopy group of the flag manifolds of a compact Lie group. We show that these $2$-spheres  have nice geometrical properties such as being totally geodesic surfaces with respect to any invariant metric on the flag manifold. We characterize when the generators with the same invariant geometry are in  the same homotopy class. This is done by exploring the action of Weyl group on the irreducible components of isotropy representation of the flag manifold.
\end{abstract}

{\noindent \bf Keywords:} Flag manifolds, second homotopy, invariant geometry.
\medskip

{\noindent \bf MSC(2010):} 14M17, 57T20, 53C30

\section*{Introduction}
We consider (generalized)  flag manifolds of a compact Lie group $U$, that is, homogeneous manifolds $\mathbb{F}_\T=U/U_\T$, where the isotropy $U_\T$ is a connected subgroup with maximal rank.  These omnipresent manifolds have being studied from several points of view: symplectic geometry, algebraic geometry, riemannian geometry and combinatorics (see for instance \cite{besse}, \cite{biron}, \cite{GGSM}, \cite{lonardo-jordan}). Moreover, several geometric objects on flag manifolds can be described in a very explicit way in terms of Lie theory and representation theory. 

In this work, we describe explicitly topological phenomena on flag manifolds via invariant geometry and geometry of root systems.
One of our motivation is the work of Dur\'an-Mendoza-Rigas \cite{duran-rigas}, where the authors give a geometric description of Blakers-Massey element, the generator of the homotopy group $\pi_6(S^3)$, by considering the principal bundle $S^3 \cdots {\rm Sp}(2) \to S^7$ and explicitly describing the boundary homomorphism  
$\pi_7(S^7) \stackrel{\partial}{\to} \pi_6(S^3)$.

In the first part of the paper (Section \ref{boundary-sec}) we consider the second homotopy group of the flag manifold $\mathbb{F}_\T$.  It is well known to be generated by 2-spheres given by the Schubert cells associated to reflections of the {\em simple} roots outside the simple roots of the isotropy.  In our first result, we consider the
principal bundle $U_\T \cdots U \to \F_\T$ and explicitly describe the boundary homomorphism $\pi_2(\F_\T) \stackrel{\partial}{\to}  \pi_1(U_\T)$ on special 2-spheres $\sigma^\vee_\alpha: S^2 \to \F_\T$ associated to {\em all} the roots outside the roots of the isotropy.  As an application we describe precisely the homotopy class of some 2-spheres by using the combinatory of the root system.

Regarding differential geometry,  in \cite{twistor} Burstall-Rawnsley showed that the generators of $\pi_2(\F_\T)$ are totally geodesic with respect to a specific metric (the so called normal metric) and in particular, it is an harmonic map with respect to that metric. In the second part (Section \ref{equiharmonic-spheres}) of this paper we explore the properties of our generators $\sigma^\vee_\alpha$ in terms of the harmonic map theory and, in our second result, we prove that these generators are totally geodesic with respect to {\em any} invariant metric on $\F_\T$. It follows that $\pi_2(\F_\T)$ is generated by equiharmonic maps, that is, harmonic maps with respect to any invariant metric on $\F_\T$.

A very important ingredient in the study of the geometry of flag manifolds is its isotropy representation and the corresponding isotropy components. It allows us to describe invariant tensors in the flag manifold (e.g. invariant metrics, almost complex structures, symplectic forms and so on). In the third part of the paper (Sections \ref{theta-rigid} and \ref{actionW}) we explore the relation between the invariant geometry of the generators $\sigma^\vee_\alpha$ and its homotopy class. This is done after a careful analysis of the action of  the Weyl group on the irreducible components of the isotropy representation, which is purely a problem on the geometry of the root system. In particular we introduce the concept of $\T$-rigid roots to answer, in our third result, the following question: when are two {\em isometric} generators of $\pi_2$, say $\sigma^\vee_\alpha$ and $\sigma^\vee_\beta$ in the same {\em homotopy class}?
We feel that there are interesting examples of the interplay between invariant geometry and topology to be explored when the invariant geometry and homotopy classes of these spheres do not coincide, for example, the partial flag manifolds of type $B$, $C$, $F$ or $G$, acoording to our result (see Example \ref{exemplog2} for type $G$).

We finish the introduction with two remarks on the generality of our setup.  In the first place, we avoided to take the compact connected group $U$ semisimple and simply connected from the start. This makes the setup more flexible to produce examples and easily adaptable to the context of maximal rank homogeneous spaces $U/L$, that is, where the isotropy $L$ has maximal rank but can be disconnected.  Let $L_0$ be the connected component subgroup of $L$, then $U/L_0$ is a flag manifold of $U$ and the fibration $U/L_0 \to U/L$ is the universal covering of $U/L$ which, thus, induces isomorphism in the higher homotopy groups.  Thus, the results of Section 2 follows through: the 2-spheres $\sigma^\vee_\alpha$ project to generators of $\pi_2(U/L)$, whose images can be 2-spheres or projective planes.  
This can be of use to study the topology of {\em real flag manifolds} of maximal rank.  For example, the real flag manifold of maximal rank ${\rm SO}(4)/{\rm S}({\rm O}(2) \times {\rm O}(2))$ is the Grassmannian of planes of $\R^4$ and its universal cover is ${\rm SO}(4)/({\rm SO}(2) \times {\rm SO}(2))$, the oriented Grassmannian of planes of $\R^4$, the maximal flag manifold of the non-simply connected group ${\rm SO}(4)$. In the second place, we prove the results on the action of the Weyl group on the isotropy  components in the more general context of {\em nonreduced} root systems. Again, this may be of future interest for the study of the invariant geometry of real flag manifolds (see \cite{mauro-luiz}).

\section{Preliminaries}

We recall some results and constructions on the fundamental group of compact Lie groups, since we will use similar arguments in what follows.  See Helgason \cite{helgason}
or Hilgert and Neeb \cite{neeb} for details and proofs.
Let a compact connected Lie group $U$ have Lie algebra $\u$ and exponential map $\exp: \u \to U$.
Let $T$ have Lie algebra $\t$ and {\em lattice} given by
$$
\Gamma = \{ \gamma \in \t: \, \exp( 2\pi \gamma ) = 1 \}
$$
which is canonically isomorphic to $\pi_1(T)$ by the map $\Gamma \to \pi_1(T)$ that takes $\gamma \in \Gamma$ to the based homotopy class of the loop $S^1 \to T$, $e^{\theta \ii}\mapsto \exp(\theta \gamma)$. 
Composing with the map induced by the inclusion $T \subset U$ gives a surjective map $\Gamma \to \pi_1(U)$ whose kernel is given by the following construction (see Theorem 12.4.14 p.494 of \cite{neeb}).

\subsubsection*{Coroot vectors}

We have that $\u$ is the compact real form of the complex reductive Lie algebra $\g = \u^\C$.  The adjoint representation of the Cartan subalgebra $\h = \t^\C$ splits as the root space decomposition
$
\g = \h \oplus \sum_{\alpha \in \Pi} \g_\alpha
$
with root space
$$
\g_\alpha = \{ X \in \g:\, \ad(H) X =  \alpha(H) X, \, \forall H \in \h  \}
$$
where $\Pi \subset \h^*$ is the root system.  We have that $\dim \g_\alpha = 1$ and 
that each root $\alpha$ is imaginary valued in $\t$ so that $\alpha \in \ii \t^*$.
To each root $\alpha$ there corresponds the unique {\em coroot vector} $H^\vee_\alpha \in \h$ such that
$$
H^\vee_\alpha \in [\g_\alpha, \, \g_{-\alpha}]\quad\text{and}\quad 
\alpha(H^\vee_\alpha) = 2
$$


Now, let $X \mapsto \ov{X}$ denote conjugation in $\g$ w.r.t.\ $\u$, it is an automorphism of $\g$.  We have that $\ov{\h} = \h$ and, for a root $\alpha$, $\ov{\alpha(H)} = -\alpha( \ov{H} )$ for $H \in \h$, so that $\ov{\g_\alpha} = \g_{-\alpha}$.  
For $X_{\alpha} \in \g_{\alpha}$, we have that both
$$
A_\alpha = \frac{1}{2}(X_\alpha - \ov{X_\alpha}) 
\qquad
S_\alpha = \frac{1}{2\ii}(X_\alpha + \ov{X_\alpha})
$$
belong to $\u$ and are such that $[H, A_\alpha] = \ii \alpha(H) S_\alpha$ and
$[H, S_\alpha] = -\ii \alpha(H) A_\alpha$, for $H \in \t$.
Consider the real root space 
$$
\u_\alpha = {\rm span}_\R \{ A_\alpha, S_{\alpha} \}
$$
Let $\Pi^+$ be a choice of positive roots, then $\u$ splits as the real root space decomposition
$$
\u = \t \oplus \sum_{\alpha \in \Pi^+} \u_\alpha
$$

Choosing $X_{\alpha} \in \g_\alpha$ such that $[X_\alpha, \ov{X_\alpha}] = H^\vee_\alpha$ we have that $\ii H^\vee_\alpha \in \t$ and that  $[S_\alpha, A_\alpha] = \ii H^\vee_\alpha/ 2$. Furthermore
$[\ii H^\vee_\alpha, A_\alpha] = - 2 S_\alpha$
and
$[\ii H^\vee_\alpha, S_\alpha] = 2 A_\alpha$ 
so that
$$ 
\u(\alpha) = {\rm span}_\R \{ S_\alpha, \, \ii H^\vee_\alpha, \, A_{\alpha} \}
$$
is a subalgebra of $\u$ isomorphic with $\su(2)$ by the explicit isomorphism 
$$
\frac{1}{2\ii}
\begin{pmatrix}
0 & \phantom{-}1 \\
1 & \phantom{-}0
\end{pmatrix}
\mapsto
S_\alpha
\qquad
\begin{pmatrix}
\ii & \phantom{-}0 \\
0 & -\ii
\end{pmatrix}
\mapsto
\ii H^\vee_\alpha
\qquad
\frac{1}{2}
\begin{pmatrix}
\phantom{-}0 & \phantom{-}1 \\
-1 & \phantom{-}0
\end{pmatrix}
\mapsto
A_\alpha
$$
Let $U(\alpha)$ be the connected subgroup of $U$ with Lie algebra $\u(\alpha)$.  Since $\u(\alpha)$ is semisimple, if follows that $U(\alpha)$ is compact.  This  isomorphism $ \psi: \su(2) \to \u(\alpha)$ can be uniquely integrated to a group epimorphism $ \Psi: SU(2) \to U(\alpha)$, since $SU(2)$ is simply connected.
It maps
$$
\Psi
\begin{pmatrix}
e^{\theta \ii}&    \\
 & e^{-\theta \ii}\\
\end{pmatrix}
=
\exp( \theta \ii H^\vee_\alpha)
$$
so that $\exp(2 \pi \ii H^\vee_\alpha) = 1$ and thus $\ii H^\vee_\alpha \in \Gamma$.  

The $\Z$-span of the coroot vectors $\ii H^\vee_\alpha$, $\alpha \in \Pi$,  
gives the {\em coroot group} $\Gamma^\vee$, a subgroup of $\Gamma$.  
The map $\Gamma \to \pi_1(U)$ is then an epimorphism with kernel $\Gamma^\vee$ so that it induces a natural isomorphism 
\begin{equation}
\label{eq:isompi1}
\Gamma / \Gamma^\vee \to \pi_1(U) 
\end{equation}
This can interpreted geometrically as: the loops in $T$ generate the loops in $U$, but the loops in $U$ coming from a coroot $\ii H^\vee_\alpha$ shrinks to the identity since it comes from the ``equator'' of the simply connected $SU(2)$ which is then immersed in $U(\alpha)$.

\subsubsection*{Dual roots and Weyl group}

Choose an $\Ad(U)$ invariant inner product $\prod{\cdot,\cdot}$ in $\u$ and extend it to an Hermitian product in $\g$ where $\u$ and $\ii \u$ are orthogonal.  
Restricting it to $\h$ gives a linear isomorphism $\h^* \to \h$ that associates to each functional $\phi \in \h^*$ a vector $H_\phi \in \h$ such that $\phi(H) = \prod{H_\phi, H}$ for all $H \in \h$.   This isomorphism furnishes in $\h^*$ with  an Hermitian product and its restriction to the real subspace spanned by the roots is an inner product.  To a root $\alpha$, since $\alpha \in \ii \t^*$, there corresponds an $H_\alpha \in \ii \t$ that is proportional to the coroot vector $H^\vee_\alpha$, with proportionality given by
$$
H^\vee_\alpha = \frac{2 H_\alpha}{\prod{\alpha, \alpha}}
$$
The dual root system $\Pi^\vee$ of $\Pi$ is given by the dual roots
$$
\alpha^\vee = \frac{2 \alpha}{\prod{\alpha, \alpha}}
$$
It follows that, under this isomorphism, the coroot vectors correspond to dual roots, in other words, $H^\vee_\alpha = H_{\alpha^\vee}$.  

The reflection $r_\alpha$ of $\t$ around the hyperplane $\alpha = 0$ is given by $r_\alpha(H) = H - \alpha(H) \alpha^\vee$. 
Let $W = N(T)/T$ be the Weyl group of $U$.  Fix a choice $\Sigma$ of simple roots.  We see an element $w \in W$ either as an element of $N(T)$ acting by the adjoint action on $\t$, or as an element of the isometry subgroup of $\t$ generated by simple reflections $r_\alpha$, $\alpha \in \Sigma$.
Let $H \in \t$ and $u \in U$ be such that $\Ad(u) H \in \t$, then $\Ad(u)H = w H$ for some $w \in W$ (see Proposition VII.2.2 p.285 of \cite{helgason}).
We have that $W$ also acts on the roots $\Pi$ by the coadjoint action $w^*\alpha(H)=\alpha(w^{-1}H)$.

\section{Boundary homomorphism and $\pi_2$}
\label{boundary-sec}
In this section we use the boundary homomorphism to investigate the second homotopy group of flag manifolds.

\subsection{Hopf fibration}
\label{sec:hopf}
Consider the Hopf fibration $U(1) \cdots SU(2) \stackrel{h\,}{\to} \C P^1$ over the  Riemann sphere $\C P^1$, given by
$$
h
\begin{pmatrix}
z & -\ov{w} \\
w & \phantom{-}\ov{z}
\end{pmatrix}
= z/w
\qquad
\text{with fiber}
\qquad
U(1) = \left\{
\begin{pmatrix}
\lambda & \\
                 & \ov{\lambda}
\end{pmatrix} 
:\, 
\begin{array}{r}
\lambda \in \C \\
|\lambda|=1
\end{array}
\right\}
$$
over the basepoint $\infty =z/0$, where $z, w \in \C$ are such that $|z|^2 + |w|^2 = 1$.
We explicitly compute the boundary isomorphism of the homotopy exact sequence 
\begin{equation}
\label{eq:seqexata}
0= \pi_2(SU(2)) \to \pi_2(\C P^1) \stackrel{\partial}{\to} \pi_1(U(1)) \to \pi_1(SU(2)) = 0
\end{equation}

Consider the continuous map 
$$
f: D \subset \C \to \C P^1 \qquad z = r e^{\theta \ii} \mapsto \tg(\pi r/2) e^{\theta \ii}
$$
from the disk $D$ of radius $r \leq 1$, $\theta \in [0,2\pi]$, to the Riemann sphere. 
It is continuous, maps the origin to the origin, the disk interior $r < 1$ homeomorphically into the sphere minus $\infty$ and maps the whole boundary $r = 1$ onto the basepoint $\infty$.  It follows that it induces the generator of $\pi_2(\C P^1)$. 
Consider a continuous lift of $f$ to $SU(2)$ given by
\begin{equation}
F: D  \to SU(2) \qquad z = r e^{\theta \ii} \mapsto 
\left(
\begin{array}{ll}
\sen(\pi r/2) e^{\theta \ii}& -\cos(\pi r/2) \\
\cos(\pi r/2)                  & \phantom{-}\sen(\pi r/2) e^{-\theta \ii}
\end{array}
\right)
\end{equation}
where, clearly, $h \circ F = f$.
At 
the boundary $r = 1$ of $D$ we have
$$
F|_{\partial D}:\,
e^{\theta \ii}
\mapsto
\begin{pmatrix}
e^{\theta \ii}& \\
                 & e^{-\theta \ii}
\end{pmatrix} 
$$
Thus, in homotopy we have
$$
\partial(1) = \partial( [f] ) = [F|_{\partial D}] = 
\left[ e^{\theta \ii}\mapsto 
\begin{psmallmatrix}
e^{\theta \ii}& \\
   & e^{-\theta \ii}
\end{psmallmatrix} 
\right] = 1
$$
where the leftmost $1$ is the generator of $\pi_2(\C P^1)$ on the base and the rightmost $1$ is the generator of $\pi_1(U(1))$ on the fiber.

Consider now the quotient Hopf fibration 
$
U(1)/\{ \pm 1 \} \cdots SU(2)/\{ \pm 1 \} \stackrel{h\,}{\to} \C P^1
$
given by the same map.  It is actually the frame bundle fibration
$$
SO(2) \cdots SO(3) \stackrel{g}{\to} S^2
$$
over the Euclidean unit sphere $S^2 \subset \R^3$, given by 
$$
g(*) = \text{last column of }* 
\quad\quad \text{with fiber} \quad\quad
\begin{pmatrix}
SO(2)  & \\
& 1 \\
\end{pmatrix}
$$
over the basepoint $e_3 = (0,0,1)$, were $SO(3)$ acts canonically in $\R^3$ and we identify the fiber with $SO(2)$.  In fact, let
$\R^3$ be the imaginary quaternions spanned by $\ii, \j, \k$, so that $e_3 = \k$.  Since $SU(2)$ can be identified with the unit quaternions $q = z + \j w$, its Lie algebra is then this $\R^3$. The 2-covering epimorphism $SU(2) \to SO(3)$ is then the adjoint representation $\Ad$, the action of the unit quaternions on $\R^3$ by conjugation, and has kernel $\{\pm 1\}$.  Note that $h(q) = z/w$ and that $g(  \Ad(q) ) =  q \k q^{-1}$. The following diagram commutes
\begin{equation}
\label{eq:SU2-SO3}
\begin{array}{ccccc}
U(1) & \cdots & SU(2) & \stackrel{h}{\to} & \C P^1 \\
\text{\tiny 1:2} \downarrow \phantom{\Ad} &  & \text{\tiny 1:2} \downarrow \Ad  & &  
\text{\tiny 1:1} \downarrow \phi  \\
SO(2) & \cdots & SO(3) & \stackrel{g}{\to} & S^2 
\end{array}
\end{equation}
where $\phi$ is the diffeomorphism defined by $\phi( h(q) ) =  q \k q^{-1}$, $q$ a unit quaternion in $SU(2)$.  Quotienting the groups in first row of the diagram above we get that the frame bundle fibration is isomorphic to the quotient Hopf fibration.  Computing its boundary $\ov{\partial}: \pi_2(S^2)  \to \pi_1(SO(2))$
we have, by naturality, that the following diagram commutes
$$
\begin{array}{ccc}
\pi_2(\C P^1) & \stackrel{\partial}{\to} &  \pi_1(U(1)) \\
\text{\tiny 1:1} \downarrow \phi & &   \text{\tiny 1:2} \downarrow \wt{\pi}   \\
\pi_2(S^2)  & \stackrel{\ov{\partial}}{\to} & \pi_1(SO(2))
\end{array}
$$
where we denoted the induced maps in homotopy again by their names.  It follows that
$$
\ov{\partial}(1) = 2
$$
where the leftmost $1$ is the generator of $\pi_2(S^2)$ on the base and the rightmost $2$ is twice the generator of $\pi_1(SO(2))$ on the fiber.
From now on we topologically identify 
$$
\C P^1 = S^2
$$

\begin{remark}
\label{remark:quat}
Consider the 1 by 1 unitary symplectic group ${\rm Sp}(1)$ of unitary quaternions, isomorphic to $SU(2)$, the 2 by 2 unitary symplectic group of quaternionic matrices
$$
{\rm Sp}(2) = \left\{
\begin{pmatrix}
p & r \\
q & s
\end{pmatrix}:\, p, q, r, s \in \H, \quad 
\begin{array}{ll}
|p|^2 + |q|^2 = 1, & |r|^2 + |s|^2 = 1 \\
\ov{p}r   + \ov{q}s = 0, & p\ov{q}   + r\ov{s} = 0 
\end{array}
\right\}
$$
and the Hopf fibration ${\rm Sp}(1) \times {\rm Sp}(1) \cdots {\rm Sp}(2) 
\stackrel{h}{\to} \H P^1$ over the quaternionic projective line $\H P^1 = S^4$, given by
$$
h
\begin{pmatrix}
p & r \\
q & s
\end{pmatrix}
= p/q
\qquad
\text{with fiber}
\qquad
{\rm Sp}(1) \times {\rm Sp}(1) = \left\{
\begin{pmatrix}
\lambda & \\
                 & \mu
\end{pmatrix} 
\right\}
$$
over the basepoint $\infty =p/0$, where $\lambda, \mu \in \H$ are such that $|\lambda| = |\mu| = 1$.
The same argument as before can be used to compute the boundary map $\pi_4(\H P^1) \stackrel{\partial}{\to} \pi_3(S^3)$.  Indeed, consider the continuous map 
$$ 
f: D \subset \H \to \H P^1  \qquad  q  \mapsto \tg(\pi |q|/2) \frac{q}{|q|}
$$
from the ball $D$ of radius $\leq 1$.  It is continuous, maps the origin to the origin, the ball interior $r < 1$ homeomorphically into the sphere minus $\infty$ and maps the whole boundary $r = 1$ onto the basepoint $\infty$.  It follows that it induces the generator of $\pi_4(\H P^1)$.  Consider a continuous lift of $f$ to ${\rm Sp}(2)$ given by
\begin{equation}
F: D  \to {\rm Sp}(2) \qquad q  \mapsto 
\left(
\begin{array}{ll}
\ds \sen(\pi |q|/2)\frac{q}{|q|} & -\cos(\pi |q|/2) \\
\cos(\pi |q|/2)                  & \ds \phantom{-}\sen(\pi |q|/2) \frac{\ov{q}}{|q|}
\end{array}
\right)
\end{equation}
where, clearly, $h \circ F = f$. 
Thus, in homotopy we have 
$$
\partial(1) = \partial( [f] ) = [F|_{\partial D}] = 
\left[ q \mapsto 
\begin{psmallmatrix}
q & \\
   & \ov{q}
\end{psmallmatrix} 
\right] = 
(1, -1)
$$
where $q$ stands for unitary quaternions $|q|=1$, the leftmost $1$ is the generator of $\pi_4(\H P^1)$ on the base and the rightmost $1$ is the generator of $\pi_3( {\rm Sp}(1) ) = \Z$ on each factor of the fiber.

We also have the quotient Hopf fibration 
$$
{\rm Sp}(1) \times {\rm Sp}(1)/\{ \pm 1 \}  \cdots
{\rm Sp}(2)/\{ \pm 1 \} \stackrel{h\,}{\to} \H P^1
$$
given by the same map, which is actually the frame bundle fibration
$$
{\rm SO}(4) \cdots {\rm SO}(5) \stackrel{g}{\to} S^4
$$
over the Euclidean unit sphere $S^4 \subset \R^5$.
This is because ${\rm Sp}(2)$ is the universal cover of ${\rm SO}(5)$ and ${\rm Sp}(1) \times {\rm Sp}(1)$ is the universal cover of ${\rm SO}(4)$, both with the same kernel $\{\pm 1\}$ (see Examples 6.50 (b) and (c), p.208 of \cite{poor}).
Computing the boundary $\ov{\partial}: \pi_4(S^4)  \to \pi_3({\rm SO}(4))$
we have, by naturality, that the induced diagram in homotopy commutes
$$
\begin{array}{ccc}
\pi_4(\H P^1) & \stackrel{\partial}{\to} &  \pi_3( {\rm Sp}(1) \times {\rm Sp}(1) ) \\
\text{\tiny 1:1} \downarrow & & \text{\tiny 1:1} \downarrow  \\
\pi_4(S^4)  & \stackrel{\ov{\partial}}{\to} & \pi_3({\rm SO}(4))
\end{array}
$$
where the righmost column is an isomorphism, since the universal covering induces isomorphism in higher homotopy groups.  
It follows that
$$
\ov{\partial}(1) = (1,-1)
$$
where the leftmost $1$ is the generator of $\pi_4(S^4)$ on the base and the rightmost 
$1$ is the generator of $\pi_3( {\rm Sp}(1) ) = \Z$ on each factor of the fiber $\pi_3({\rm SO}(4)) \simeq \pi_3({\rm Sp}(1) \times {\rm Sp}(1))$.
\end{remark}

\subsection{Flag fibration}

Let $\F_\T = U/U_\T$ be a flag manifold of a connected compact Lie group $U$, that is, the isotropy $U_\T$ is centralizer of a torus.  It follows that $U_\T$ is connected.
In this section we consider the flag fibration $U_\T \cdots U \to \F_\T$ with basepoint $1$ on the fiber (thus on the total space) and basepoint $o = U_\T$ (trivial coset) on the base.
By reducing to rank one, we compute the {\em boundary map} of the homotopy exact sequence 
\begin{equation}
\label{eq:seqexata}
0 = \pi_2(U) \to \pi_2(\F_\T) \stackrel{\partial}{\to} \pi_1(U_\T) \to \pi_1(U)
\end{equation}
on special elements of $\pi_2(\F_\T)$.  Then we use this to show that these special elements provide a $\Z$-basis for $\pi_2(\F_\T)$.  Note that the second homotopy group of a connected Lie group is zero so that, by exactness, the boundary map is always injective. 

Note that different groups $U$ may yield diffeomorphic quotients $\F_\T$, nonetheless, the boundary homomorphism depends on the chosen groups since it depends on the topology of the whole fibration.

\begin{example}
The only flag manifold of rank 1 is the sphere $S^2$ considered in the previous section.
If the group is simply connected, the flag fibration is the Hopf fibration whose boundary map is surjective.
If the group is not simply connected, the flag fibration is the frame bundle fibration whose boundary map is not surjective.
\end{example}

First we identify the image of the boundary map.   By exactness, it is the kernel of the map $\pi_1(U_\T) \to \pi_1(U)$.  Fix $T$ a maximal torus of $U_\T$, hence of $U$.  It follows that $U$ and $U_\T$ share the same lattice $\Gamma$.  
Denote by $\Pi$ the roots of $\u$, it contains the roots $\Pi_\Theta$ of $\u_\T$, the Lie algebra of $U_\T$.  Denote by $\Gamma^\vee$ the coroot lattice of $\u$, it contains the coroot lattice $\Gamma^\vee_\Theta$ of the isotropy $\u_\T$.  
Fix from now on the natural isomorphisms that follow from (\ref{eq:isompi1})
$$
\Gamma / \Gamma^\vee_\T \to \pi_1(U_\T) \qquad
\Gamma / \Gamma^\vee \to \pi_1(U) 
$$
Note that $\Gamma/\Gamma^\vee$ can have torsion, as in $\pi_1(SO(3)) = \Z_2$.

Denote by $R^\vee$ the dual root group, given by the $\Z$-span of the dual roots $\Pi^\vee$. 
Since the isomorphism $\phi \in \h^* \mapsto \ii H_\phi \in \h$ takes a dual root $\alpha^\vee$ to the coroot vector $\ii H_{\alpha^\vee} = \ii H^\vee_{\alpha}$, it follows that it restricts to a group isomorphism
$$
R^\vee \to \Gamma^\vee
$$ 
Denote by $R^\vee_\T$ the dual root group of the isotropy, given by the $\Z$-span of the dual roots $\Pi^\vee_\T$.  Then the isomorphism above restricts to an isomorphism  $R^\vee_\T \to \Gamma^\vee_\T$ so that we have the natural group isomorphism
\begin{equation}
\label{isom:raizdual-coraiz}
R^\vee/R^\vee_\T  \to \Gamma^\vee/\Gamma^\vee_\T
\end{equation}
Fix a system of simple roots $\T$ for $\Pi_\T$ and extend it to a system of simple roots $\Sigma$ for $\Pi$. This gives a choice of positive roots $\Pi^+$ and then $\Pi^+_\T$.

\begin{proposition}
\label{prop:imagem}
The image of $\partial$ is $\Gamma^\vee/\Gamma^\vee_\T$ and has no torsion. Furthermore, $\partial$ is surjective iff $U$ is simply connected.
\end{proposition}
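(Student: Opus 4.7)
The plan is to translate the entire statement to the lattice side, where it becomes essentially bookkeeping.

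First, by exactness of the homotopy sequence (\ref{eq:seqexata}), $\mathrm{Im}(\partial)$ equals the kernel of the inclusion-induced map $\iota_*:\pi_1(U_\T)\to\pi_1(U)$, where $\iota:U_\T\hookrightarrow U$. My first step is to identify $\iota_*$ in lattice terms. Since $U$ and $U_\T$ share the torus $T$, hence the lattice $\Gamma$, the isomorphisms $\Gamma/\Gamma^\vee_\T\xrightarrow{\sim}\pi_1(U_\T)$ and $\Gamma/\Gamma^\vee\xrightarrow{\sim}\pi_1(U)$ are both defined by sending $\gamma\in\Gamma$ to the class of the loop $e^{\theta\ii}\mapsto\exp(\theta\gamma)$ in the appropriate group. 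The loop lives in $T\subset U_\T\subset U$, so pushing forward by $\iota$ amounts to the identity on representatives; consequently $\iota_*$ is the canonical quotient $\Gamma/\Gamma^\vee_\T\twoheadrightarrow\Gamma/\Gamma^\vee$, whose kernel is visibly $\Gamma^\vee/\Gamma^\vee_\T$. This establishes the first assertion.

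Second, for torsion-freeness I use the isomorphism (\ref{isom:raizdual-coraiz}): it suffices to show $R^\vee/R^\vee_\T$ is free abelian. Since $\Pi$ is the (reduced) root system of a compact Lie group and $\Sigma$ is a simple system, the standard fact that every root is an integer linear combination of simple roots with coefficients of constant sign transfers to dual roots, so $\{\alpha^\vee:\alpha\in\Sigma\}$ is a $\Z$-basis of $R^\vee$. Because $\T\subset\Sigma$, $R^\vee_\T$ is the sub-$\Z$-module generated by the sub-basis $\{\alpha^\vee:\alpha\in\T\}$, hence
\[
R^\vee/R^\vee_\T\;\cong\;\bigoplus_{\alpha\in\Sigma\setminus\T}\Z\,(\alpha^\vee+R^\vee_\T),
\]
which is free abelian, in particular torsion-free.

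Finally, the surjectivity criterion follows immediately: $\partial$ is surjective iff $\Gamma^\vee/\Gamma^\vee_\T=\Gamma/\Gamma^\vee_\T$, iff $\Gamma^\vee=\Gamma$, iff $\pi_1(U)=0$ via (\ref{eq:isompi1}).

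There is really no serious obstacle here once the lattice translation of $\iota_*$ is in place; the only point requiring minor care is the claim that simple dual roots form a $\Z$-basis of $R^\vee$, which is standard in the reduced setting relevant to compact Lie groups but worth flagging since later sections of the paper allow $\Pi$ to be nonreduced.
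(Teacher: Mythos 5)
Your proposal is correct and follows essentially the same route as the paper's proof: identify $\mathrm{Im}(\partial)=\ker(\pi_1(U_\T)\to\pi_1(U))$ as the kernel of the canonical quotient $\Gamma/\Gamma^\vee_\T\to\Gamma/\Gamma^\vee$, deduce torsion-freeness from the fact that $\T^\vee$ is a sub-$\Z$-basis of the $\Z$-basis $\Sigma^\vee$ of $R^\vee$ via the isomorphism (\ref{isom:raizdual-coraiz}), and obtain the surjectivity criterion from $\Gamma^\vee=\Gamma$ iff $\pi_1(U)=0$. The only difference is that you spell out a bit more explicitly why the inclusion-induced map becomes the canonical quotient on lattices, which is a harmless elaboration.
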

\begin{proof}
Under the above isomorphims, the map $\pi_1(U_\T) \to \pi_1(U)$ induced by the inclusion $U_\T \subset U$ clearly becomes the natural map $\Gamma / \Gamma^\vee_\T \to \Gamma / \Gamma^\vee$, which has kernel $\Gamma^\vee/\Gamma^\vee_\T$.  

For the torsion statement, 
note that the dual simple roots $\Sigma^\vee$ is a $\Z$-basis for $R^\vee$ and the dual simple roots $\Sigma^\vee_\T$ is a $\Z$-basis for $R^\vee_\T$.  Since this $\Z$-basis of $R^\vee_\T$ is a subset of this $\Z$-basis of $R^\vee$, it follows that 
$R^\vee/R^\vee_\T$ has no torsion.  The result then follows from the isomorphism (\ref{isom:raizdual-coraiz}).

If $\pi_1(U)=0$ then $\partial$ is surjective by the exact sequence (\ref{eq:seqexata}).
Reciprocally, if $\partial$ is surjective then $\Gamma^\vee / \Gamma^\vee_\T = \Gamma / \Gamma^\vee_\T$ so that $\Gamma^\vee = \Gamma$ and then $\pi_1(U) =
\Gamma/\Gamma^\vee = 0$.
\end{proof}
The previous result shows that the image of the boundary homomorphism depends only on the Lie algebra of $U$ and $U_\T$.   We now construct a special element of $\pi_2(\F_\T)$ which gets mapped to each coroot vector in this image.  Let $\alpha \in \Pi$ be a root, not necessarily simple. Consider the compact subgroup $U(\alpha)$ of $U$ 
and epimorphism $\Psi: SU(2) \to U(\alpha)$.
Since
$
\Psi\begin{psmallmatrix}
e^{\theta \ii}&    \\
 & e^{-\theta \ii}\\
\end{psmallmatrix}
=
\exp( \theta \ii H^\vee_\alpha)
$
it follows that $\Psi(U(1)) 
\subset T \subset U_\T$.
We then get the commutative diagram of immersions
\begin{equation}
\label{eq:hopf-em-U}
\begin{array}{rcrcl}
U(1) \,\,\,\, & \cdots & SU(2) &  \stackrel{h}{\to} & S^2 \quad \\
 \downarrow \Psi &  & \downarrow \Psi &  &  \downarrow  \sigma^\vee_\alpha \\
U_\T \,\,\,\, & \cdots & U \quad & \to & \F_\T 
\end{array}
\end{equation}
where in the upper row we have the Hopf bundle and $\Psi$ factors to the immersion of a 2-sphere into $\F_\T$ given by
$$
\sigma^\vee_\alpha: 
S^2 \to \F_\T
\qquad 
z/w
\mapsto \Psi \begin{pmatrix}
z & -\ov{w} \\
w & \ov{z}
\end{pmatrix}
o
$$
whose image is the orbit of $U(\alpha)$ through the basepoint $o$ of $\F_\T$.  
Denote the based homotopy class of this map again by $\sigma^\vee_\alpha \in \pi_2(\F_\T)$.

\begin{proposition}
\label{teo:bordo}
$\partial(  \sigma^\vee_\alpha ) = \ii H^\vee_\alpha \mod \Gamma^\vee_\T$
\end{proposition}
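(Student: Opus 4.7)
The plan is to invoke naturality of the boundary map applied to the commutative diagram (\ref{eq:hopf-em-U}). Since $\sigma^\vee_\alpha$ was defined as the map obtained from $\Psi \colon SU(2)\to U(\alpha)\subset U$ by passing to the quotients $S^2 = SU(2)/U(1)$ and $\F_\T = U/U_\T$, this is precisely a morphism of principal bundles (with $U(1)$ mapping into the fiber $U_\T$ via $\Psi$ restricted to the diagonal torus). By functoriality of the long exact homotopy sequence, the diagram
\begin{equation*}
\begin{array}{ccc}
\pi_2(S^2) & \stackrel{\partial_H}{\to} & \pi_1(U(1)) \\
\downarrow (\sigma^\vee_\alpha)_* & & \downarrow \Psi_* \\
\pi_2(\F_\T) & \stackrel{\partial}{\to} & \pi_1(U_\T)
\end{array}
\end{equation*}
commutes, where $\partial_H$ is the Hopf boundary map computed in Section \ref{sec:hopf}.

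Next, I would apply both maps to the canonical generator $1 \in \pi_2(S^2)$. By the explicit computation in Section \ref{sec:hopf}, $\partial_H(1)$ is the homotopy class of the loop $e^{\theta\ii}\mapsto \bigl(\begin{smallmatrix} e^{\theta\ii}& \\ & e^{-\theta\ii}\end{smallmatrix}\bigr)$ in $U(1)\subset SU(2)$. Pushing forward via $\Psi$ and using the formula
$$
\Psi\begin{pmatrix} e^{\theta\ii} & \\ & e^{-\theta\ii}\end{pmatrix} = \exp(\theta\, \ii H^\vee_\alpha)
$$
recorded in the preliminaries, I conclude that $\Psi_*(\partial_H(1))$ is represented by the loop $\theta \mapsto \exp(\theta\, \ii H^\vee_\alpha)$ in $U_\T$.

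Finally, this loop factors through the maximal torus $T \subset U_\T$, so under the canonical isomorphism $\Gamma/\Gamma^\vee_\T \to \pi_1(U_\T)$ (which is obtained by composing $\Gamma\xrightarrow{\sim}\pi_1(T)$ with the map induced by $T\subset U_\T$), this class is exactly $\ii H^\vee_\alpha \bmod \Gamma^\vee_\T$. Combining with commutativity of the naturality square, $\partial(\sigma^\vee_\alpha) = \partial((\sigma^\vee_\alpha)_*(1)) = \Psi_*(\partial_H(1)) = \ii H^\vee_\alpha \bmod \Gamma^\vee_\T$, as claimed.

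There is no real obstacle here beyond bookkeeping: the computation was already done once (for the Hopf fibration), and the only new ingredient is that $\Psi$ maps the diagonal $U(1)$ inside $SU(2)$ into $T\subset U_\T$ sending the standard generator of $\pi_1(U(1))$ to the lattice element $\ii H^\vee_\alpha$. The mildest care needed is in keeping track of basepoints (both diagrams have compatible basepoints at the identity of the group and the trivial coset in the base) and in noting that the right vertical arrow really lands in $\pi_1(U_\T)$ via the inclusion $U(\alpha)\subset U_\T$, which is guaranteed because $\Psi(U(1))\subset T\subset U_\T$.
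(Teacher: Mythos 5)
Your proposal is correct and follows essentially the same route as the paper: the paper lifts $\sigma^\vee_\alpha \circ f = \Psi\circ f$ explicitly by $\Psi\circ F$ and reads off the boundary loop $\theta\mapsto\exp(\theta\,\ii H^\vee_\alpha)$, which is exactly the content of the naturality square you invoke applied to the generator of $\pi_2(S^2)$. The only difference is presentational (naturality of $\partial$ versus composing the explicit lift), and both hinge on the same two ingredients: the Hopf computation of Section \ref{sec:hopf} and the identity $\Psi\bigl(\begin{smallmatrix} e^{\theta\ii} & \\ & e^{-\theta\ii}\end{smallmatrix}\bigr)=\exp(\theta\,\ii H^\vee_\alpha)$.
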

\begin{proof}
The homotopy class of $\sigma^\vee_\alpha$ in $\F_\T$ is represented by $\Psi \circ f: D \to \F_\T$. This has a lift  $\Psi \circ F: D \to U$ such that
$$
\Psi \circ F|_{\partial D} =
\Psi
\begin{pmatrix}
e^{\theta \ii}& \\
                 & e^{-\theta \ii}
\end{pmatrix} 
= \exp(\theta \ii H^\vee_\alpha) 
$$
It follows that
$$
\partial( \sigma^\vee_\alpha ) = [e^{\theta \ii}\mapsto 
\exp(\theta \ii H^\vee_\alpha) ] \in \pi_1(U_\T)
$$
which correponds to $\ii H^\vee_\alpha$ mod $\Gamma^\vee_\T$ under the natural isomorphism.  
\end{proof}

The real root decomposition of the isotropy Lie algebra is
$$
\u_\T = \t \oplus \sum_{\alpha \in \Pi^+_\T} \u_\alpha
$$
By the real root decomposition of $\u$ it follows that $\u_\T$ is the centralizer in $\u$ of the torus
\begin{equation}
\label{eq:toro-isotropia}
\t_\T = \{ H \in \t: \, \alpha(H) = 0 \quad \forall \alpha \in \T \}
\end{equation}
Since $U_\T$ is connected, it is the centralizer of $\t_\T$ in $U$.
Note that if $\alpha \in \Pi_\T$ then $U(\alpha) \subset U_\T$.

\begin{lemma}
\label{lema:isotropia-rank1}
If $\alpha \not\in \Pi_\T$ then $U(\alpha) \cap U_\T = T(\alpha)$
\end{lemma}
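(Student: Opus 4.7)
The plan is to prove the two inclusions separately. For $T(\alpha) \subset U(\alpha) \cap U_\T$, one simply observes that $T(\alpha) = \Psi(U(1)) \subset T \subset U_\T$ (since $U_\T$ contains the maximal torus $T$) and $T(\alpha) \subset U(\alpha)$ by definition.

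For the reverse inclusion, I first identify the Lie algebra of the intersection. Since $\u_\T$ is the centralizer of $\t_\T$ in $\u$, the real root decomposition of $\u_\T$ gives the characterization that $\alpha \in \Pi_\T$ if and only if $\alpha|_{\t_\T}=0$. The hypothesis $\alpha \notin \Pi_\T$ therefore supplies some $H \in \t_\T$ with $\alpha(H) \neq 0$. Writing a generic $X \in \u(\alpha)$ as $X = a S_\alpha + b\, \ii H^\vee_\alpha + c A_\alpha$ and using the bracket relations $[H, S_\alpha] = -\ii \alpha(H) A_\alpha$, $[H, A_\alpha] = \ii \alpha(H) S_\alpha$, $[H, \ii H^\vee_\alpha] = 0$, the condition $[H, X] = 0$ forces $a = c = 0$. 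Hence $\u(\alpha) \cap \u_\T = \R\, \ii H^\vee_\alpha$, the Lie algebra of $T(\alpha)$, and so the identity component of $U(\alpha) \cap U_\T$ is exactly $T(\alpha)$.

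The hard part is ruling out any other connected component. Since $U(\alpha)$ is a rank-one compact group isomorphic to $SU(2)$ or $SO(3)$, the only closed subgroup strictly between $T(\alpha)$ and $U(\alpha)$ is the two-component normalizer $N_{U(\alpha)}(T(\alpha))$. Suppose for contradiction that some $n \in N_{U(\alpha)}(T(\alpha)) \setminus T(\alpha)$ lies in $U_\T$. Such $n$ represents the nontrivial Weyl element of $U(\alpha)$, hence $\Ad(n)\, \ii H^\vee_\alpha = -\ii H^\vee_\alpha$; while $n \in U_\T$ forces $\Ad(n)$ to fix $\t_\T$ pointwise. Orthogonally decompose $\ii H^\vee_\alpha = v_1 + v_2$ in $\t$ with $v_1 \in \t_\T$ and $v_2 \perp \t_\T$; applying $\Ad(n)$ yields $\Ad(n) v_2 = -2 v_1 - v_2$, and since $\Ad(n)$ preserves the invariant inner product on $\u$, we obtain $|v_2|^2 = |2 v_1 + v_2|^2 = 4 |v_1|^2 + |v_2|^2$, so $v_1 = 0$. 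But the identification $H_\phi \leftrightarrow \phi$ gives $\prod{\ii H^\vee_\alpha, H} = \ii \alpha^\vee(H)$ for $H \in \t$, so the projection $v_1$ of $\ii H^\vee_\alpha$ onto $\t_\T$ vanishes iff $\alpha|_{\t_\T} = 0$, contradicting $\alpha \notin \Pi_\T$.
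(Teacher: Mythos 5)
Your proof is correct, but it takes a genuinely different route from the paper's. The paper reduces everything to Lie algebras by invoking the fact that the centralizer of a torus in a connected compact group is connected: it introduces $\t_\alpha = \ker \alpha \subset \t$ and its centralizer $U_\alpha$ (here reducedness of the root system is used to get $\mathrm{Lie}(U_\alpha) = \t \oplus \u_\alpha$), observes that $U_\alpha \cap U_\T$ is the centralizer of the torus generated by $\t_\alpha + \t_\T$ and hence is connected with Lie algebra $\t$, so equals $T$, and then sandwiches $U(\alpha)\cap U_\T$ between $T(\alpha)$ and $U(\alpha)\cap T = T(\alpha)$. You instead compute $\u(\alpha)\cap\u_\T = \R\, \ii H^\vee_\alpha$ directly and then handle the possible extra component by hand: any element of $U(\alpha)\cap U_\T$ outside $T(\alpha)$ must represent the nontrivial Weyl element of the rank-one group $U(\alpha)$, hence sends $\ii H^\vee_\alpha$ to $-\ii H^\vee_\alpha$ while fixing $\t_\T$ pointwise and acting by isometries, which forces $\ii H^\vee_\alpha \perp \t_\T$, i.e.\ $\alpha|_{\t_\T}=0$, i.e.\ $\alpha \in \Pi_\T$ --- a contradiction. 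Both arguments hinge on the equivalence $\alpha \in \Pi_\T \iff \alpha|_{\t_\T} = 0$. The paper's route is shorter once one grants the (nontrivial) connectedness theorem for centralizers of tori; yours is more self-contained, using only the classification of closed subgroups of a rank-one group whose identity component is the maximal torus, and it makes visible that the hypothesis $\alpha \notin \Pi_\T$ is needed twice, once at the Lie algebra level and once to exclude the Weyl flip. The only cosmetic caveats: your identity $\prod{\ii H^\vee_\alpha, H} = \ii\,\alpha^\vee(H)$ should be read with the convention that the Hermitian extension is linear in the first slot (both sides are then real for $H \in \t$), and "the only closed subgroup strictly between" should be "any closed subgroup with identity component $T(\alpha)$ lies in the normalizer"; neither affects the argument.
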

\begin{proof}
Let $T(\alpha)$ be the centralizer of $\ii H^\vee_\alpha$ in $U(\alpha)$, it is contained in $T$ and is a maximal torus of the rank 1 group $U(\alpha)$.
We have that 
$
U(\alpha) \cap T = T(\alpha)
$,
since an $u \in U(\alpha) \cap T$ centralizes $ \ii H^\vee_\alpha \in \t$, so that $u \in T(\alpha)$, and the other inclusion is immediate.

Now consider the orthogonal subspace to $H_\alpha$ given by $\t_\alpha = \{ H \in \t: \, \alpha(H) = 0 \}$.  If a root $\beta$ is such that $\beta|_{\t_\alpha} = 0$ then $H_\beta$ is a multiple of $H_\alpha$ so that $\beta = \pm \alpha$, since the root system is reduced.  It follows from the real root decomposition of $\u$ that the centralizer of $\t_\alpha$ in $\u$ is $\t \oplus \u_\alpha$.  Denote by $U_\alpha$ the centralizer of $\t_\alpha$ in $U$, its Lie algebra is then $\t \oplus \u_\alpha$.  It follows that $U_\alpha \cap U_\T$ is the centralizer of the torus $\t_\alpha + \t_\T$, hence it is connected.  By the real root decomposition of $\u$ and $\u_\T$, it follows that the centralizer of $\t_\alpha + \t_\T$ in $\u$ is
$$
(\t \oplus \u_\alpha) \cap (\t \oplus \sum_{\alpha \in \Pi_\T} \u_\alpha) = \t
$$
since $\alpha \not\in \Pi_\T$. It follows that
$
U_\alpha \cap U_\T = T
$.
Since $\u(\alpha) \subset \t \oplus \u_\alpha$, we have that $U(\alpha) \subset U_\alpha$, and since $T(\alpha) \subset T \subset U_\T$, it follows from the previous intersections that
$$
T(\alpha) \subset U(\alpha) \cap U_\T \subset U(\alpha) \cap ( U_\alpha \cap U_\T) = 
U(\alpha) \cap T = T(\alpha).
$$
as claimed. 
\end{proof}

The next result is the main result of this section.
\begin{theorem}
\label{teo:pi2}
\begin{enumerate}[(i)]
\item The dual roots $\alpha^\vee, \beta^\vee, \delta^\vee \in \Pi$ add up to 
$
\delta^\vee = \alpha^\vee + \beta^\vee$ {\rm mod} $R^\vee_\T
$ if, and only if, in $\pi_2(\F_\T)$ we have
$$
\sigma^\vee_\delta = \sigma^\vee_\alpha * \sigma^\vee_\beta
$$

\item If $\alpha \not\in \Pi_\T$ then $\sigma^\vee_\alpha$ is diffeomorphism onto a 2-sphere.

\item  With regards to the topology of $\F_\T$ and of the map $\sigma^\vee_\alpha$, we can assume that $U$ is compact semisimple and simply connected.

\item We have that $\{ \sigma^\vee_\alpha:  \, \alpha \in \Sigma - \T \}$ is a $\Z$-basis of $\pi_2(\F_\T)$.

\end{enumerate}
\end{theorem}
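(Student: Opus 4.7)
Our approach rests on two facts established above: first, by exactness of \eqref{eq:seqexata} and $\pi_2(U)=0$ the boundary map $\partial\colon \pi_2(\F_\T)\to\pi_1(U_\T)$ is an injective homomorphism; second, by Proposition \ref{teo:bordo}, $\partial(\sigma^\vee_\alpha)=\ii H^\vee_\alpha \bmod \Gamma^\vee_\T$. Part (i) then follows by direct translation across $\partial$: additivity and injectivity give that $\sigma^\vee_\delta = \sigma^\vee_\alpha * \sigma^\vee_\beta$ iff $\partial(\sigma^\vee_\delta) = \partial(\sigma^\vee_\alpha) + \partial(\sigma^\vee_\beta)$, and Proposition \ref{teo:bordo} combined with the isomorphism \eqref{isom:raizdual-coraiz} rewrites this as $\delta^\vee \equiv \alpha^\vee + \beta^\vee \pmod{R^\vee_\T}$.

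For part (ii) we factor $\sigma^\vee_\alpha$ through the Hopf quotient $SU(2)/U(1)=S^2$ and identify the image with the orbit $U(\alpha)\cdot o$. By Lemma \ref{lema:isotropia-rank1} the $U(\alpha)$-isotropy of $o$ is the maximal torus $T(\alpha)=\Psi(U(1))$; since $\ker\Psi \subseteq U(1)$, the preimage $\Psi^{-1}(T(\alpha))$ is exactly $U(1)\subset SU(2)$. Hence $\sigma^\vee_\alpha\colon S^2 \to \F_\T$ is a continuous bijection onto $U(\alpha)\cdot o$, which forces it to be a diffeomorphism since both are compact 2-manifolds.

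For part (iii) we reduce in two steps. Because $U_\T$ has maximal rank it contains $Z(U)_0$, so the natural map $U_{ss} \to \F_\T$ is surjective and we may replace $U$ by its semisimple part, obtaining $\F_\T = U_{ss}/(U_{ss}\cap U_\T)$. Then, letting $\tilde U \to U_{ss}$ be the (finite) universal cover and $\tilde U_\T$ the identity component of the preimage of $U_{ss}\cap U_\T$, the induced map $\tilde U/\tilde U_\T \to \F_\T$ is a covering. The key point is to verify $\pi_1(\F_\T) = 0$: from \eqref{eq:seqexata} this equals $\mathrm{coker}(\pi_1(U_\T) \to \pi_1(U))$, and under the identifications $\pi_1(U_\T) = \Gamma/\Gamma^\vee_\T$, $\pi_1(U) = \Gamma/\Gamma^\vee$ this map is a quotient, hence surjective. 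The covering thus collapses to a diffeomorphism, and each $U(\alpha)$, being connected and semisimple, lifts to $\tilde U$ so the 2-spheres $\sigma^\vee_\alpha$ are unchanged.

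Part (iv) then follows with $U$ simply connected: Proposition \ref{prop:imagem} promotes $\partial$ to an isomorphism $\pi_2(\F_\T) \xrightarrow{\sim} \Gamma^\vee/\Gamma^\vee_\T$, which under \eqref{isom:raizdual-coraiz} sends $\sigma^\vee_\alpha \mapsto \alpha^\vee \bmod R^\vee_\T$. Since $\Sigma^\vee$ is a $\Z$-basis of $R^\vee$ with the subset $\T^\vee \subseteq \Sigma^\vee$ a $\Z$-basis of $R^\vee_\T$, the complementary $(\Sigma-\T)^\vee$ projects to a $\Z$-basis of the quotient, which transports back through $\partial^{-1}$ to the asserted $\Z$-basis of $\pi_2(\F_\T)$. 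We expect the most delicate step to be the reduction in (iii), where simple-connectivity of $\F_\T$ and the clean lift of the subgroups $U(\alpha)$ both need to be checked; the other three items are near-immediate consequences of Propositions \ref{prop:imagem} and \ref{teo:bordo}.
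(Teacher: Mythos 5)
Your proposal is correct and follows essentially the same route as the paper: Proposition \ref{teo:bordo} plus injectivity of $\partial$ for (i), Lemma \ref{lema:isotropia-rank1} and the identification $\Psi^{-1}(T(\alpha))=U(1)$ for (ii), the two-step reduction (quotient by the center, then pass to the universal cover) for (iii), and the $\Z$-basis argument via \eqref{isom:raizdual-coraiz} for (iv). The only cosmetic differences are that in (ii) the paper verifies $\Psi^{-1}(T(\alpha))=U(1)$ by a centralizer computation rather than via $\ker\Psi\subseteq U(1)$ (and note that "continuous bijection of compact $2$-manifolds" gives a homeomorphism --- for the diffeomorphism you should invoke that the map is a bijective local diffeomorphism), and in (iii) the paper takes the full preimage $\wt\pi^{-1}(U_\T)$ instead of its identity component, which avoids the (correct) detour through $\pi_1(\F_\T)=0$.
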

\begin{proof}
For item (i), by the isomorphism (\ref{isom:raizdual-coraiz}) it follows that 
$\delta^\vee = \alpha^\vee + \beta^\vee$ {\rm mod} $R^\vee_\T$ is equivalent to
$\ii H_\delta^\vee = \ii H_\alpha^\vee + \ii H_\beta^\vee \mod \Gamma^\vee_\T$.
Since the $\ii H_\bullet^\vee$ are coroot vectors and $\partial$ is injective, Proposition \ref{teo:bordo} then implies item (i).

For item (ii), the image of $\sigma^\vee_\alpha$ is the orbit $U(\alpha)o$ of the basepoint $o$. It has isotropy
$U(\alpha) \cap U_\T = T(\alpha)$, by the previous Lemma. Hence the image of $\sigma^\vee$ is diffeomorphic to $U(\alpha)/T(\alpha)$, which is a 2-sphere.
To show that $\sigma^\vee_\alpha$ is a diffeomorphism, from (\ref{eq:hopf-em-U}) we have the diagram
\begin{equation}
\begin{array}{rcl}
SU(2)/U(1) & \to & S^2 \quad \\
\downarrow \Psi \quad &  &  \downarrow  \sigma^\vee_\alpha \\
U(\alpha)/T(\alpha) & \to & U(\alpha)o 
\end{array}
\end{equation}
whose rows are diffeomorphisms, so that we only have to prove that the first column is a diffeomorphism. Since it is induced by the epimorphism $SU(2) \stackrel{\Psi}{\to} U(\alpha)$, we only have to prove that $\Psi^{-1}(T(\alpha)) = U(1)$.  We have already stablished the inclusion $\supseteq$, right before (\ref{eq:hopf-em-U}). Let then $u \in SU(2)$ be such that $\Psi(u) \in T(\alpha)$. Then $\Psi(u)$ centralizes $i H_\alpha^\vee = 
\psi\begin{psmallmatrix}
\ii &    \\
   & -\ii \\
\end{psmallmatrix}$
so that
$$
\psi
\left(
\Ad(u) \begin{psmallmatrix}
\ii &    \\
   & -\ii \\
\end{psmallmatrix}
\right)
=
\Ad(\Psi(u))
\psi\begin{psmallmatrix}
\ii &    \\
   & -\ii \\
\end{psmallmatrix}
=
\psi\begin{psmallmatrix}
\ii &    \\
   & -\ii \\
\end{psmallmatrix}
$$
and, since $\psi$ is an isomorphism of Lie algebras, if follows that $u$ centralizes $\begin{psmallmatrix}
\ii &    \\
   & -\ii \\
\end{psmallmatrix}$ so that it lies in $U(1)$, as claimed.

For item (iii), since the center $Z$ of $U$ is contained in $T$, it is contained in $U_\T$.
Consider the quotient groups $\ov{U} = U/Z$ and $\ov{U_\T} = U_\T/Z$.
Then $\ov{U}$ is connected semisimple.  Furthermore, its maximal torus $T/Z$ is contained in $\ov{U_\T}$ which is then a maximal rank subgroup.  Let $\ov{\Psi}: SU(2) \to \ov{U}$ be the homomorphism constructed as before, now for $\ov{U}$, and let $\ov{\pi}: U \to \ov{U}$ be the quotient homomorphism. Denote by $\ov{\exp}$ the exponential map of $\ov{U}$.
Since $\ov{\pi}( \exp ) =  \ov{\exp}( d\ov{\pi}_1  )$, it follows that the leftmost diagram below commutes
\begin{equation*}
\begin{tikzcd}
SU(2) \arrow["\Psi"]{r} \arrow["\ov{\Psi}"]{rd} & U \arrow["\ov{\pi}"]{d}\\ 
& \ov{U}
\end{tikzcd}
\qquad\qquad\qquad
\begin{tikzcd}
S^2 \arrow["\sigma^\vee_\alpha"]{r} \arrow["\ov{\sigma^\vee_\alpha}"]{rd} & \F_\T \arrow["\ov{\phi}"]{d}\\ 
& \ov{U}/\ov{U_\T} 
\end{tikzcd}
\end{equation*}
The quotient map $\ov{\pi}: U \to \ov{U}$ induces the diffeomorphism $\ov{\phi}: \F_\T \to \ov{U}/\ov{U_\T}$.  Let $\ov{\sigma^\vee_\alpha}: S^2 \to \ov{U}/\ov{U_\T}$ be the immersion constructed as before, now for $ \ov{U}/\ov{U_\T}$.  The commutativity of the leftmost diagram above implies that the righmost diagram also commutes.  

Thus, exchanging the pair $U, U_\T$ by $\ov{U}, \ov{U}_\T$, we can assume that $U$ is compact semisimple since the diffeomorphism $\ov{\phi}$ takes $\sigma^\vee_\alpha$ to $\ov{\sigma^\vee_\alpha}$.
Now $U$ being compact semisimple, it has a compact semisimple covering $\wt{U}$ with covering homomorphism $\wt{\pi}: \wt{U} \to U$.  Consider the subgroup $\wt{U}_\T = \wt{\pi}^{-1}( U_\T )$. It is closed, since $U_\T$ is closed and $\wt{\pi}$ is continuous, and it is of maximal rank in $\wt{U}$, since this is a local property.  Let $\wt{\Psi}: SU(2) \to \wt{U}$ be the homomorphism constructed as before, now for $\wt{U}$. 
Denote by $\wt{\exp}$ the exponential map of $\wt{U}$.  Since $\wt{\pi}( \wt{\exp} ) =  \exp$, it follows that the leftmost diagram below commutes
\begin{equation*}
\begin{tikzcd}
& \wt{U} \arrow["\wt{\pi}"]{d}
\\ 
SU(2) \arrow["\Psi"]{r} \arrow["\wt{\Psi}"]{ru} & U
\end{tikzcd}
\qquad\qquad\qquad
\begin{tikzcd}
& \wt{U}/\wt{U}_\T \arrow["\wt{\phi}"]{d}
\\ 
S^2 \arrow["\sigma^\vee_\alpha"]{r} \arrow["\wt{\sigma^\vee_\alpha}"]{ru} & U
\end{tikzcd}
\end{equation*}
The covering map $\wt{\pi}: \wt{U} \to U$ induces the diffeomorphism $\wt{\phi}: \wt{U}/\wt{U}_\T \to \F_\T$.  Let $\wt{\sigma^\vee_\alpha}: S^2 \to \wt{U}/\wt{U}_\T$ be the immersion constructed as before, now for $ \wt{U}/\wt{U}_\T$.  The commutativity of the leftmost diagram above implies that the righmost diagram also commutes. 
Thus, exchanging the pair $U, U_\T$ by $\wt{U}, \wt{U}_\T$, we can assume that $U$ is compact semisimple and simply connected since the diffeomorphism $\wt{\phi}$ takes $\wt{\sigma^\vee_\alpha}$ to $\sigma^\vee_\alpha$. 

For item (iv), we use the previous item to assume that $U$ is simply connected so that, by the exact sequence (\ref{eq:seqexata}), the boundary map is an isomorphism. By the no torsion part of Proposition \ref{prop:imagem}, it follows that $\alpha^\vee$, $\alpha \in \Sigma - \T$, projects to a $\Z$-basis of $R^\vee/R^\vee_\T$.  By the isomorphism (\ref{isom:raizdual-coraiz}) it follows that $\ii H^\vee_{\alpha}$, $\alpha \in \Sigma - \T$, projects to a $\Z$-basis of $\Gamma^\vee/\Gamma^\vee_\T$.  Proposition \ref{teo:bordo} then implies item (iv).
\end{proof}

It follows that $\pi_2(\F_\T) \simeq \Gamma^\vee/\Gamma^\vee_\T$ where we do not have a canonical isomorphism (since the boundary depends on the choice of $U$) but we do have a preferred $\Z$-basis.  By the isomorphism (\ref{isom:raizdual-coraiz}) it follows that 
\begin{equation}
\label{isom:raizdual-raizdual}
\pi_2(\F_\T) \simeq R^\vee/R^\vee_\T
\end{equation}
where, for roots $\alpha$, $\beta$, we have that $\sigma_\alpha^\vee = \sigma_\beta^\vee$ in $\pi_2(\F_\T)$ iff $\alpha^\vee = \beta^\vee \mod R^\vee_\T$.

\begin{remark}
Note that in items (i)-(iii) of the previous Theorem $\alpha$ is not necessarily a simple root.
For a simple root $\alpha$, the image of $\sigma_\alpha^\vee$ is a, so called, minimal Schubert variety of $\F_\T$.  
\end{remark}

\begin{remark}
Following these lines, the computations in Remark \ref{remark:quat} may be of use to describe 
the $\pi_4$ of {\em quaternionic} flag manifolds.  
\end{remark}


\section{Invariant geometry}

In this section we relate the well known invariant riemannian geometry of a flag manifold $\F_\T = U/U_\T$, which has a nice description in terms of the roots $\Pi$ of $U$ and the roots $\Pi_\T$ of the isotropy $U_\T$, with the spheres embeedings $\sigma^\vee_\alpha$ on $\F_\T$ of the previous section.

An $U$-invariant riemmanian metric of $\F_\T$ furnishes an inner product on the tangent space $T_o(\F_\T)$ at the basepoint  $o$ that is invariant by the isotropy representation of $U_\T$.  Reciprocally, once we fix a $U_\T$ -invariant inner product on $T_o(\F_\T)$, we can use the action of $U$ to spread it around $\F_\T$ and obtain a $U$-invariant riemmanian metric. It follows that a $U$-invariant riemmanian metric of $\F_\T$ is uniquely determined by the $U_\T$-invariant inner product on $T_o(\F_\T)$.
Note that the projection $U \to \F_\T$ differentiates at the identity to the linear epimorphism
\begin{equation}
\label{eq:esptg}
\u \to T_o(\F_\T)
\end{equation}
whose kernel is the isotropy subalgebra $\u_\T$.  Fix in $\u$ an $\Ad(U)$-invariant inner product $\prod{\cdot,\cdot}$. 
Consider $\m_\T = \u_\T^\perp$ the orthogonal complement in $\u$ of $\u_\T$.  The restriction of the above epimorphism to $\m_\T$ then canonically identifies it with $T_o(\F_\T)$.
The restriction of $\prod{\cdot,\cdot}$ to $\m_\T$ then furnishes an inner product in the tangent space.  Since $\m_\T$ is $U_\T$-invariant, the isotropy representation becomes $U_\T \to {\rm O}(\m_\T)$ and is given by $\Ad|_{\m_\T}$, the restriction to $\m_\T$ of the adjoint representation of $U_\T$.  

Now let $\alpha \not\in \Pi_\T$ and denote by $S^2(\alpha)$ the 2-sphere given by the image of the embeeding $\sigma^\vee_\alpha$. It is the orbit of the basepoint $o$ by $U(\alpha)$, whose tangent space at $o$ is the real root space $\u_\alpha$ and whose isotropy is, by Lemma \ref{lema:isotropia-rank1}, $U(\alpha) \cap U_\T = T(\alpha)$, which acts by rotations in $\u_\alpha$.  
Now, the restriction to $\u_\alpha$ of an $U_\T$-invariant inner product of $\m_\T$ furnishes another $T(\alpha)$-invariant inner product on $\u_\alpha$ which is, thus, a positive multiple by $\lambda_\alpha$ of the previous one, 
by the uniqueness of an inner product invariant by rotations.
The diameter of $S^2(\alpha)$ in the corresponding invariant metric is the previous diameter divided by $\lambda_\alpha$.
From the real root space decomposition of $\u$ and $\u_\T$, since distinct real root spaces are orthogonal under any $U$-invariant inner product, it follows that 
\begin{equation}
\label{eq:decompos-raizes-tg}
\m_\T = \sum_{\alpha \not\in \Pi_\T} \u_{\alpha}
\end{equation}
so that the various parameters $\lambda_\alpha$, $\alpha \not\in \Pi_\T$, that determine the diameter of the 2-spheres $S^2(\alpha)$, also determine the invariant metric. Neverthless, these parameters cannot in general be chosen independently since the isotropy action of $U_\T$ may take some $\u_{\alpha}$ to another $\u_\beta$ so that $\lambda_\alpha = \lambda_\beta$, since it acts by isometries.

\begin{example}
The $SO(3)$-invariant metrics of the 2-sphere $S^2$ are multiples of the round metric of the embeeding $S^2 \subset \R^3$.
In fact, let $\R^3$ be spanned by $\ii, \j, \k$, with the cross product this is the Lie algebra of $SO(3)$. The isotropy subgroup of the basepoint $\k$ is ${\rm SO}(2)$ and the isotropy subalgebra is the $\k$ axis. Fix in $\R^3$ the usual inner product. It follows that $\m = \k^\perp =$ the $\ii,\j$ plane $\R^2$, on which ${\rm SO}(2)$ acts by rotation.  The only rotation-invariant inner product on this plane are the positive multiples of the inner product of $\R^2$, which is the restriction of the inner product of $\R^3$.  The latter is precisely the inner product induced by the the embeeding $S^2 \subset \R^3$ in the tangent plane to $\k$, which proves our claim.

The same conclusion applies to the ${\rm SU}(2)$-invariant riemannian metrics of the Riemman sphere, when identified with the euclidean sphere $S^2$ as in (\ref{eq:SU2-SO3}).  This is because the adjoint action of ${\rm SU}(2)$ on its Lie algebra is the canonical action of the ${\rm SO}(3)$ on $\R^3$.  

\end{example}
%

Being an orthogonal representation, $\m_\T$ decomposes as the sum of mutually orthogonal and irreducible representations
\begin{equation}
\label{eq:decompos-esptg}
\m_\T = \m_1 \oplus \m_2 \oplus \cdots \oplus \m_k
\end{equation}
We call this an isotropy decomposition and each $\m_i$ an {\em isotropy component}.  The fundamental result is then Siebenthal's theorem (Théorème 7.1 of \cite{siebenthal}), which proves that the isotropy components are unique (apart from reordering) irreducible {\em inequivalent} representations and describes each one of them in terms of roots.   
Now, an $\Ad(U_\T)$-invariant inner product in $\m_\T$ is given by
$\prod{\Lambda X, Y}$, where $\Lambda: \m_\T \to \m_\T$ is a symmetric, positive linear operator that commutes with the adjoint action of $U_\T$.
From decomposition (\ref{eq:decompos-esptg}) and Schur's Lemma it follows that $\Lambda|_{\m_i} = \lambda_i$ times the identity of $\m_i$.  From the inequivalence of the isotropy components if follows that the $\lambda_i$'s are not related.  Thus, the $\Ad(U_\T)$-invariant inner products in $\m_\T$, and the corresponding $U$ invariant riemannian metrics of $\F_\T$, are described by the $k$ positive parameters $\lambda_1, \ldots, \lambda_k$, one for each isotropy component.

We now recall Siebenthal's description of the isotropy components $\m_i$.  
Denote by $R_\T$ the root group of the isotropy, given by the $\Z$-span of the roots $\Pi_\T$.
We consider the quotient group $\h^*/R_\T$ so that two functionals $\alpha,\, \beta \in \h^*$ are equivalent mod $R_\T$ when their difference is in $R_\T$. 
Partitioning the union of the positive roots $\Pi^+$ with the zero functional according to the equivalence classes mod $R_\T$ we get
$$
\Pi^+ \cup \{ 0 \} = \Pi_0 \cup \Pi_1 \cup \cdots \cup \Pi_s
$$
where the residue class of $0$ is
$$
\Pi_0 = \Pi^+_\T \cup \{0\}
$$ 
The residue classes mod $R_\T$ characterize the isotropy components by  
$$
\m_i = \sum_{\alpha \in \Pi_i} \u_{\alpha}
$$
where $i = 1,2,\ldots, k$.

\begin{example}
In type A, the mod $R_\T$ equivalence classes of positive roots and the isotropy components are related to block decompositions of the Lie algebra.  Consider for example the partial flag manifold $\F_\T = {\rm SU}(9)/{\rm S}({\rm U}(3)\times {\rm U}(1) \times {\rm U}(2))$. Then its simple roots $\T$ corresponds to the three intervals of simple roots given by the black dots above the diagonal in the picture below.
\begin{center}
       \def\svgwidth{5cm}
       \input{blocos-isotropia_pdf.pdf}
\end{center}
The other hollow black dots in the blocks right above $\T$ correspond to the other positive roots in the isotropy. The colored dots outside the blocks on the diagonal correspond to positive roots outside the isotropy. Their block decomposition correspond to the equivalence classes mod $\T$, say the red ones give $\Pi_1$, the green ones give $\Pi_2$ and the blue ones give $\Pi_3$.  The corresponding isotropy components $\m_1$, $\m_2$, $\m_3$ are then given by picturing the above picture as a block decomposition of anti-hermitian  matrices, where the upper triangular block is appropriately reflected onto the lower triangular block.
\end{example}

\begin{remark}
In the maximal flag manifold we have $\T = \emptyset$, so that each $R_\T = \{ 0 \}$, each residue class $\Pi_i$ is a singleton, each $\m_i$ is a real root space, and then (\ref{eq:decompos-raizes-tg}) is already the isotropy decomposition.  

In partial flag manifolds where $\T \neq \emptyset$, apart from $\Pi_0 - \{0\}$, the other residue classes do not, in general,  define root subsystems. For example, if there exists two $\Theta$-equivalent roots $\alpha, \beta$ in $\Pi_i$, $i \neq 0$, with Cartan integer $\prod{\alpha, \beta^\vee}=1$ then $r_\beta(\alpha) = \alpha - \beta$ is a root that lies in $R_\T$, and so in $\Pi_\Theta$, thus outside $\Pi_i$.
\end{remark}

\subsection{Equiharmonic spheres}
\label{equiharmonic-spheres}

We now establish an interesting relation between the spheres $S^2(\alpha)$ and harmonic maps into $\F_\T$. Consider $M^2$ a compact Riemann surface equipped with a metric $g$, let $(N,h)$ be a compact Riemannian manifold and $\phi:(M^2,g)\to (N,h)$ a differentiable map. The {\em energy} of $\phi$ is given by 
$$
E(\phi)=\frac{1}{2}\int_{M}{|d\phi|^2\, \omega_g},
$$ 
where $\omega_g$ is the volume measure defined by the metric $g$ and $|d\phi|$ is the Hilbert-Schmidt norm of $d\phi$. The differentiable map $\phi$ is {\em harmonic} if it is a critical point of the energy functional. Examples of harmonic maps are the minimal immersions and the totally geodesic immersions.

\begin{definition}
A map $\phi:M^2\to \F$ is called {\em equihamonic map} if it is harmonic with respect {\em any} invariant metric on $\F$. 
\end{definition}

From now on we restrict ourselves to the case $M^2=S^2$, the 2-sphere.  We will use also the concept of {\em homogeneous equigeodesics}: curves of the form $\gamma(t)=(\exp tX)\,  T$, $X\in\mathfrak{m}_\T$ that are geodesics with respect to any invariant metric. The vector $X\in \mathfrak{m}_\T$ is called {\em equigeodesic vector}. One can think equigeodesics as equihamonic maps whose the domain is 1-dimensional. In \cite{CGN} it is proved that any flag manifolds admits such curves and each isotropy component of ($\ref{eq:decompos-esptg}$) consists of equigeodesic vectors, that is, for all $X\in \mathfrak{m}_i$ the curve $\gamma(t)=(\exp tX)\,  T$ is an equigeodesic on $\F_\T$. 

\begin{proposition}
For each root $\alpha \not \in \Pi_\T$ the sphere $S^2(\alpha)$ is equihamonic, that is, the map $\sigma^\vee_\alpha: S^2 \to \F_\T$ is equiharmonic. In particular $\pi_2(\F_\T)$ is generated by equiharmonic 2-spheres. 
\end{proposition}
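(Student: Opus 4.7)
The plan is to prove the stronger statement that $\sigma^\vee_\alpha$ is a totally geodesic immersion with respect to any invariant metric; since totally geodesic maps between Riemannian manifolds are automatically harmonic, the result follows.  The in-particular statement then follows immediately from Theorem \ref{teo:pi2}(iv): since $\Sigma - \T \subset \Pi \setminus \Pi_\T$, the generators $\sigma^\vee_\alpha$, $\alpha \in \Sigma - \T$, of $\pi_2(\F_\T)$ are all equiharmonic.

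For the totally geodesic statement, I would exploit the $U(\alpha)$-homogeneity of the image sphere $S^2(\alpha) = U(\alpha)\cdot o$ and reduce everything to checking a single point.  Fix an arbitrary invariant metric on $\F_\T$ coming from an $\Ad(U_\Theta)$-invariant inner product on $\m_\T$.  Because $U(\alpha) \subset U$ acts by isometries on $\F_\T$ and $S^2(\alpha)$ is a $U(\alpha)$-orbit, it suffices to verify the totally geodesic property at the basepoint $o$; that is, one must show that every geodesic of $\F_\T$ issuing from $o$ with initial velocity in $T_o S^2(\alpha)$ stays inside $S^2(\alpha)$.

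The identification $T_o S^2(\alpha) = \u_\alpha$ was already established in the discussion preceding equation (\ref{eq:decompos-raizes-tg}).  The key observation is that $\u_\alpha$ is contained in a single isotropy component $\m_i$ of the decomposition (\ref{eq:decompos-esptg}), because the components are defined as sums of real root spaces over equivalence classes of $\Pi^+ \setminus \Pi_\T$ modulo $R_\T$ and the root $\alpha$ lies in exactly one such class.  Invoking the equigeodesic result of \cite{CGN} recalled above, every vector $X \in \m_i$, and in particular every $X \in \u_\alpha$, is an equigeodesic vector; hence $\gamma(t) = \exp(tX)\cdot o$ is a geodesic of $\F_\T$ with respect to the chosen invariant metric.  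Since $\u_\alpha \subset \u(\alpha)$, this curve satisfies $\exp(tX) \in U(\alpha)$ and therefore lies entirely in $U(\alpha)\cdot o = S^2(\alpha)$.  As these curves exhaust all geodesics through $o$ with initial velocity in $\u_\alpha$, this gives the totally geodesic property at $o$, hence everywhere by homogeneity.

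The main conceptual step — and really the only nontrivial one — is recognizing that $\u_\alpha$ sits inside a \emph{single} isotropy component; once that is in place, one merely quotes the equigeodesic theorem from \cite{CGN} and the reduction via $U(\alpha)$-homogeneity.  Everything else is bookkeeping with the real root decomposition and the standard fact that totally geodesic implies harmonic.
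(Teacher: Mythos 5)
Your proposal is correct and follows essentially the same route as the paper: both arguments reduce to the basepoint by $U(\alpha)$-homogeneity, identify $T_oS^2(\alpha)$ with $\u_\alpha$, observe that $\u_\alpha$ lies in a single isotropy component, and invoke the equigeodesic theorem of \cite{CGN} to conclude that $S^2(\alpha)$ is totally geodesic for every invariant metric. The only (cosmetic) difference is the characterization of ``totally geodesic'' you verify --- you show ambient geodesics tangent to $S^2(\alpha)$ stay in it via uniqueness of geodesics, whereas the paper shows the geodesics of the induced (round) metric on $S^2(\alpha)$ are ambient geodesics --- and both are complete.
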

\begin{proof}
We will show that $S^2(\alpha)$ is totally geodesic, independent of the $U$ invariant metric on $\F_\T$. Fix an $U$-invariant metric on $\F_\T$ (and therefore an $\Ad(T)$-invariant scalar product on $\mathfrak{m}_\T$). Keep in mind the decomposition (\ref{eq:decompos-esptg}) associated with the isotropy representation. It is clear that the tangent space of  $S^2(\alpha)$ at $o$ is $\mathfrak{u}_\alpha$.
Since $S^2(\alpha)$ is an $U(\alpha)$-orbit with isotropy $U(\alpha) \cap U_\T  = T(\alpha)$ (see Lemma \ref{lema:isotropia-rank1}) it follows that the $U$-invariant metric induces $T(\alpha)$-invariant inner product on $\u_\alpha$.  Since $T(\alpha)$ acts in $\u_\alpha$ by rotation, it follows that this inner product on $\u_\alpha$ is a multiple of the Cartan-Killing inner-product of $\u(\alpha)$.  It follows that the induced metric on $S^2(\alpha)$ is a multiple of the round metric and every geodesic on $S^2(\alpha)$ is of the form $\gamma(t)=(\exp tX) T(\alpha)$. We remark that $\gamma$ is also a geodesic on $\F_\T$ since the vector $X\in\mathfrak{u}_\alpha$ is an equigeodesic vector for $\F_\T$.
Therefore, a geodesic on $S^2(\alpha)$ is also a geodesic on $\F_\T$ so that $S^2(\alpha)$ is totally geodesic with respect to the fixed invariant metric on $\F_\T$. 
Since the fixed invariant metric is arbitrary, the result follows.
\end{proof}

\subsection{$\T$-rigid roots}
\label{theta-rigid}

To interpret the invariant metrics of $\F_\T$ geometrically, denote by $S^2(\alpha)$ the embedded 2-sphere in $\F_\T$ given by the image of $\sigma^\vee_\alpha$, for a root $\alpha \not \in \Pi_\T$.

\begin{proposition}
The invariant metrics of $\F_\T$ are given by choosing the diameters of the 2-spheres $S^2(\alpha)$, for the roots $\alpha \not\in \Pi_\T$, where two spheres $S^2(\alpha)$, $S^2(\beta)$ must be given the same diameter whenever their corresponding roots $\alpha, \beta$ are congruent {\rm mod} $R_\T$.  In this case we say that these two spheres have the same invariant geometry.
\end{proposition}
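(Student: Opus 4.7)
The plan is to assemble two structural facts already established in the paper: the parametrization of $U$-invariant metrics by scalars on the isotropy components, and the identification of the restriction of any such metric to $\u_\alpha$ with a scaled round metric on $S^2(\alpha)$.

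First, I would recall from the discussion following (\ref{eq:decompos-esptg}) that every $\Ad(U_\T)$-invariant inner product on $\m_\T$ has the form $\prod{\Lambda\,\cdot\,,\,\cdot\,}$ for some symmetric positive $\Lambda : \m_\T \to \m_\T$ commuting with $\Ad(U_\T)$. By Schur's Lemma together with Siebenthal's theorem (which guarantees that the isotropy components $\m_1,\ldots,\m_k$ are irreducible and pairwise inequivalent), $\Lambda$ acts on each $\m_i$ as a positive scalar $\lambda_i$, and conversely any choice of positive $(\lambda_1,\ldots,\lambda_k)$ yields a valid invariant metric on $\F_\T$.

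Next, for a root $\alpha\not\in\Pi_\T$ the tangent space of $S^2(\alpha)$ at $o$ is $\u_\alpha$, on which the isotropy $T(\alpha)$ acts by rotations (Lemma \ref{lema:isotropia-rank1}); by uniqueness of rotation-invariant inner products on a $2$-plane, the restriction of the ambient invariant metric to $\u_\alpha$ is a positive multiple of the Cartan-Killing form. As argued in the proof of the equiharmonic proposition, the induced metric on $S^2(\alpha)$ is then a multiple of the round metric, and its diameter is determined precisely by this scalar.

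Finally, I would combine these observations via Siebenthal's description $\m_i=\sum_{\alpha\in\Pi_i}\u_\alpha$, where the $\Pi_i$ are the equivalence classes of $\Pi^+\setminus\Pi^+_\T$ mod $R_\T$: since $\Lambda$ acts on $\u_\alpha$ by the scalar $\lambda_i$ of the unique class containing $\alpha$, the diameter of $S^2(\alpha)$ depends only on this class, and $\alpha\equiv\beta\bmod R_\T$ forces $S^2(\alpha)$ and $S^2(\beta)$ to have the same diameter. Conversely, choosing one diameter per class $\Pi_i$ fixes all $\lambda_i$ and hence the invariant metric uniquely. There is no real obstacle here since every ingredient — Siebenthal's theorem, Schur's Lemma, Lemma \ref{lema:isotropia-rank1}, and the diameter computation of the previous proposition — is already in place; the task is only to phrase the correspondence cleanly.
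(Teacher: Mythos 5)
Your proposal is correct and follows essentially the same route as the paper, which states this proposition without a separate proof precisely because it is a summary of the preceding discussion: Schur's Lemma plus Siebenthal's inequivalence give one free scalar $\lambda_i$ per isotropy component, the rotation-invariance argument identifies that scalar with the diameter of each $S^2(\alpha)$ for $\alpha\in\Pi_i$, and Siebenthal's description $\m_i=\sum_{\alpha\in\Pi_i}\u_\alpha$ ties the components to the mod $R_\T$ classes. All the ingredients you cite are exactly the ones the paper relies on, so nothing is missing.
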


This partitions the spheres $S^2(\alpha)$ in classes with the same invariant geometry under any invariant metric of the ambient space $\F_\T$.
Clearly, a sufficient condition for $S^2(\alpha)$ and $S^2(\beta)$ to have the same invariant geometry is that there exists $u \in U$ such that $u S^2(\alpha) = S^2(\beta)$.  The next result shows that this condition may be checked purely on the root system.  
Denote by $W$ be the Weyl group of $U$ and by $W_\T$ be the Weyl group of the isotropy $U_\T$.

\begin{proposition}
\label{propos:isometria-esferas}
There exists $u \in U$ such that $u S^2(\alpha) = S^2(\beta)$ in $\F_\T$ iff there exists $w \in W_\T$ such that $w^*\alpha = \beta$.  In particular, $\alpha$ and $\beta$ have the same length.
\end{proposition}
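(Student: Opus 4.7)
The plan is to prove the two implications separately and read off the length equality as a byproduct. For the reverse implication, I would lift $w\in W_\T$ to a representative $u\in N_{U_\T}(T)\subseteq U_\T$, using that $W_\T=N_{U_\T}(T)/T$. Such a representative acts on the complex root spaces by $\Ad(u)\g_\gamma=\g_{w^*\gamma}$, so in particular $\Ad(u)\g_{\pm\alpha}=\g_{\pm\beta}$; taking the $\u$-real part gives $\Ad(u)\u_\alpha=\u_\beta$, and $\Ad(u)$ carries $\t\cap[\g_\alpha,\g_{-\alpha}]=\R\ii H^\vee_\alpha$ onto $\R\ii H^\vee_\beta$. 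Hence $\Ad(u)\u(\alpha)=\u(\beta)$ and, by connectedness of $U(\alpha)$ and $U(\beta)$, this lifts to $uU(\alpha)u^{-1}=U(\beta)$. Because $u\in U_\T$ fixes the basepoint $o$, we obtain $u\cdot S^2(\alpha)=uU(\alpha)\cdot o=U(\beta)\cdot o=S^2(\beta)$.

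For the forward implication, given $u\in U$ with $u\cdot S^2(\alpha)=S^2(\beta)$, I first reduce to the case $u\in U_\T$: since $u\cdot o\in S^2(\beta)=U(\beta)\cdot o$, pick $v\in U(\beta)$ with $v\cdot o=u\cdot o$; then $v^{-1}u$ fixes $o$, so lies in $U_\T$, and still sends $S^2(\alpha)$ onto $S^2(\beta)$ because $v\in U(\beta)$ stabilizes $S^2(\beta)$. Relabeling, assume $u\in U_\T$. Differentiating the set equality at $o$ yields $\Ad(u)\u_\alpha=\u_\beta$. Since $[S_\alpha,A_\alpha]=\ii H^\vee_\alpha/2$ gives $[\u_\alpha,\u_\alpha]=\R\ii H^\vee_\alpha$, applying $\Ad(u)$ yields $\Ad(u)\ii H^\vee_\alpha\in\R\ii H^\vee_\beta$; $\Ad$-invariance of the inner product on $\u$ then forces $\Ad(u)\ii H^\vee_\alpha=\pm\ii H^\vee_\beta\in\t$. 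Now I invoke the maximal-torus conjugacy theorem (Proposition VII.2.2 of \cite{helgason}) inside the compact connected group $U_\T$ with maximal torus $T$ and Weyl group $W_\T$: there exists $w\in W_\T$ such that $w\,\ii H^\vee_\alpha=\Ad(u)\ii H^\vee_\alpha=\pm\ii H^\vee_\beta$. Dualizing through the inner product via $H_{w^*\gamma}=wH_\gamma$ and $H^\vee_\gamma=2H_\gamma/\prod{\gamma,\gamma}$ produces $w^*\alpha=\pm\beta$, together with $\prod{\alpha,\alpha}=\prod{\beta,\beta}$, which is the length statement.

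The main subtlety, which I expect to be the chief technical obstacle, is the $\pm$ sign in $w^*\alpha=\pm\beta$: it reflects the possible orientation-reversal of $\Ad(u)$ on the $2$-dimensional root space $\u_\alpha$. It is immaterial for the stated equivalence, however, since $\u_\beta=\u_{-\beta}$ implies $U(\beta)=U(-\beta)$ and hence $S^2(\beta)=S^2(-\beta)$, so a Weyl element sending $\alpha$ to $-\beta$ is geometrically equivalent to one sending $\alpha$ to $\beta$. Beyond this sign bookkeeping, the proof is a routine manipulation of the real root-space data, together with the observation that the conjugacy theorem applies verbatim inside the connected compact group $U_\T$.
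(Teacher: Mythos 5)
Your proposal follows the paper's proof almost step for step: the same lift of $w$ to $N_{U_\T}(T)$ for the reverse implication, the same reduction to $u\in U_\T$ via an element $v\in U(\beta)$ with $v\cdot o=u\cdot o$, and the same appeal to the conjugacy theorem to produce the Weyl element. (You are in fact slightly more careful than the paper in applying that theorem inside $U_\T$, so as to land in $W_\T$ rather than merely in $W$.)

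One step, however, is circular as written. From $\Ad(u)\,\ii H^\vee_\alpha\in\R\,\ii H^\vee_\beta$, invariance of the inner product only gives $\Ad(u)\,\ii H^\vee_\alpha=c\,\ii H^\vee_\beta$ with $|c|\,|\ii H^\vee_\beta|=|\ii H^\vee_\alpha|$; since $|\ii H^\vee_\gamma|=2/|\gamma|$, this says $|c|=|\beta|/|\alpha|$, so $c=\pm1$ is \emph{equivalent to} the length equality you intend to deduce afterwards — the norm argument alone does not force it. The repair is easy and should be made explicit: apply the conjugacy theorem in $U_\T$ to get $w\in W_\T$ with $wH_\alpha$ proportional to $H_\beta$, hence $w^*\alpha$ proportional to $\beta$; since $w^*\alpha$ is a root and the root system is reduced, $w^*\alpha=\pm\beta$, and then $|\alpha|=|\beta|$ because $w$ is an isometry. (Alternatively, compare the eigenvalues $\pm2\ii$ of $\ad(\ii H^\vee_\alpha)$ on $\u_\alpha$ with those of $\ad(c\,\ii H^\vee_\beta)$ on $\u_\beta$ to get $c=\pm1$ directly.) Finally, your observation that $S^2(\beta)=S^2(-\beta)$ explains why the geometric hypothesis cannot distinguish $\beta$ from $-\beta$, but it does not by itself convert a $w$ with $w^*\alpha=-\beta$ into one with $w^*\alpha=\beta$, which is what the stated equivalence literally asks for; the paper's own proof passes over the same point silently by asserting $\Ad(u)\g_\alpha=\g_\beta$ rather than $\g_{-\beta}$, so you have at least made the issue visible rather than resolved it.
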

\begin{proof}
Let $w^*\alpha = \beta$, $w \in W_\T$. There exists $u \in U_\T$ such that $\Ad(u)|_{\t} = w$.  Then $\Ad(u)\u_{\alpha} = \u_{\Ad(u)^*\alpha} = \u_{w^*\alpha} = \u_{\beta}$ and also $\Ad(u) \ii H_\alpha = \ii H_{\Ad(u)^*\alpha} = \ii H_{w^*\alpha} =  \ii H_{\beta}$. It follows that $\Ad(u) \u(\alpha) = \u(\beta)$.  Exponentiating we get $ u U(\alpha) u^{-1} \, o = u U(\alpha) \, o = u U(\beta) \, o$, as claimed, since $u \, o = o$.

Now let $S^2(\beta) = U(\beta) \, o$.  There exists then $v \in U(\beta)$ such that $u \, o = v \, o$.  Thus $ v^{-1} u \in U_\T$ is such that
$v^{-1} u  S^2(\alpha) =  v^{-1} S^2(\beta) = S^2(\beta)$.  Hence, replacing $u$ by $v^{-1} u$, we can assume that $u \in U_\T$ which, together with
$u S^2(\alpha) = S^2(\beta)$, imply that $\Ad(u) \u_{\alpha} = \u_\beta$.  It follows that $\Ad(u) \g_{\alpha} = \g_\beta$ and, since $\Ad(u) \g_{\alpha} = \g_{\Ad(u)^*\alpha}$, 
we have that $\Ad(u)^*\alpha = \beta$. It follows that $\Ad(u)H_\alpha = H_\beta$, so that $\Ad(u) \ii H_\alpha = \ii H_\beta$.
Since both $\ii H_\alpha, \ii H_\beta \in \t$, there exists $w \in W$ such that $\Ad(u) \ii H_\alpha = w \ii H_\alpha = \ii H_{w^*\alpha} = \ii H_\beta$, so that $w^*\alpha = \beta$. 
\end{proof}

It is interesting to note that there may spheres $S^2(\alpha)$, $S^2(\beta)$ with the same invariant geometry but with roots $\alpha, \beta$ of different lengths (see Example \ref{exemplog2})

The 2-spheres $\sigma^\vee_\alpha$ that induce generators of $\pi_2(\F_\T)$, a homotopical object, also generate the invariant geometry of $\F_\T$, a geometrical object.   When do these homotopical generators of $\pi_2(\F_\T)$ carry the same invariant geometry?  This reduces to a question about root systems, as follows.
For a subset of roots $A \subset \Pi$, denote by $A^\vee = \{ \alpha^\vee:\, \alpha \in A\}$.  

\begin{remark}
Note that in general $R^\vee_\T$ is not the same as $(R_\T)^\vee$, since $R_\T$ is the $\Z$-span of $\T$, $R^\vee_\T$  is the $\Z$-span of $\T^\vee$ and the map $\alpha \mapsto \alpha^\vee$ is in general not linear.
Thus, $\alpha = \beta \mod R_\T$ does not imply in general that  $\alpha^\vee = \beta^\vee \mod R^\vee_\T$ and vice-versa, see the examples in the next Section.
\end{remark}

Denote by $\Pi_\T(\alpha)$ the mod $R_\T$ class of $\alpha$ in $\Pi$ and by $\Pi^\vee_\T(\alpha^\vee)$ the mod $R^\vee_\T$ class of $\alpha^\vee$ in $\Pi^\vee$.  
We say that a root $\alpha$ is {\em $\T$-rigid} when $\alpha \not\in \Pi_\T$ and
$$
\Pi_\T(\alpha)^\vee \subseteq \Pi^\vee_\T(\alpha^\vee)
$$
equivalently, when the 2-spheres $S^2(\beta)$ that have the same invariant geometry as $S^2(\alpha)$ are in the same homotopy class as $S^2(\alpha)$.  
Note that $\T$-rigidity is a property of the whole residue class $\Pi_\T(\alpha)$.
By the duality $(\alpha^\vee)^\vee = \alpha$ it follows that $\alpha^\vee$ is $\T^\vee$-rigid when
$$
\Pi^\vee_\T(\alpha^\vee)^\vee \subseteq   \Pi_\T(\alpha)
$$
equivalently, when the 2-spheres $S^2(\beta)$ that have the same homotopy class as $S^2(\alpha)$ have the same invariant geometry as $S^2(\alpha)$.

In Section \ref{actionW} we characterize $\T$-rigid roots by using the action the Weyl group $W_\T$ on the residue classes of roots mod $R_\T$.  Below we state the main consequences of these results to the invariant geometry of flag manifolds.

\begin{theorem} For a flag manifold $\F_\T$ with root system $\Pi$ and isotropy root system $\Pi_\T$, we have the following.
\begin{enumerate}[(i)]
\item If root $\beta \in \Pi_\T(\alpha)$ has the same length as $\alpha$, then the 2-sphere $S^2(\beta)$ has the same invariant geometry and same homotopy class as $S^2(\alpha)$.

\item A root $\alpha$ is $\T$-rigid 
iff the roots of the residue class $\Pi_\T(\alpha)$ have the same length,  
iff  each sphere with the same invariant geometry as $S^2(\alpha)$ is the translation of $S^2(\alpha)$ by an isometry of $U$.

\item The homotopy classes of all the 2-spheres $S^2(\alpha)$, $\alpha \not\in \Pi_\T$, coincide with their invariant geometry classes iff either the root system $\Pi$ is simply laced and $\Theta$ is arbitrary or else the root system is not simply laced and $\Theta = \emptyset$.
\end{enumerate}
\end{theorem}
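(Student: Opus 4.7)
The plan is to combine Proposition \ref{propos:isometria-esferas} and the identification $\pi_2(\F_\T) \simeq R^\vee/R^\vee_\T$ of (\ref{isom:raizdual-raizdual}) with the promised Weyl-group analysis of Section \ref{actionW}. Two elementary facts are used throughout: the duality $\alpha \mapsto \alpha^\vee$ is $W$-equivariant, and since every Weyl reflection is an isometry, for any roots $\alpha,\gamma$ one has $(r_\gamma\alpha)^\vee = r_\gamma(\alpha^\vee) = \alpha^\vee - \langle\gamma,\alpha^\vee\rangle \gamma^\vee$, the coefficient being a Cartan integer (a short computation with squared-length ratios).

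For (i), if $\beta \in \Pi_\T(\alpha)$ has the same length as $\alpha$, then the key combinatorial input of Section \ref{actionW} supplies $w \in W_\T$ with $w^*\alpha = \beta$. Same invariant geometry is immediate since by hypothesis $\beta \equiv \alpha \mod R_\T$. For same homotopy class, write $w = r_{\gamma_1}\cdots r_{\gamma_\ell}$ with each $\gamma_i \in \T$ and telescope using the formula above: $\beta^\vee - \alpha^\vee = w(\alpha^\vee) - \alpha^\vee \in R^\vee_\T$, so $\sigma^\vee_\alpha$ and $\sigma^\vee_\beta$ agree in $\pi_2(\F_\T)$ by (\ref{isom:raizdual-raizdual}).

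For (ii), the direction ``equal lengths $\Rightarrow$ $\T$-rigid'' follows immediately from (i) applied to each $\beta \in \Pi_\T(\alpha)$. The converse, which is the main content, amounts to showing that if some $\beta \in \Pi_\T(\alpha)$ has $|\beta| \neq |\alpha|$ then $\beta^\vee - \alpha^\vee \notin R^\vee_\T$; expanding $\beta^\vee - \alpha^\vee$ via $\beta = \alpha + \sum n_i \gamma_i$ and converting into the basis $\T^\vee$ through squared-length ratios produces a term proportional to $\alpha$ that cannot be absorbed into $\Z$-combinations of $\gamma_i^\vee$, and the precise structural argument is exactly the characterization of $\T$-rigidity supplied by Section \ref{actionW}. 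The equivalence with the geometric statement follows from Proposition \ref{propos:isometria-esferas}, which identifies the $U$-translates of $S^2(\alpha)$ inside $\F_\T$ with the $W_\T$-orbit of $\alpha$; combined with the first equivalence, $\T$-rigidity of $\alpha$ means its residue class coincides with a single $W_\T$-orbit, which is precisely the statement that every sphere of the same invariant geometry as $S^2(\alpha)$ is a $U$-translate of $S^2(\alpha)$.

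For (iii), I do a case analysis. If $\Pi$ is simply laced, any two roots have the same length, so by (i) every $\alpha \not\in \Pi_\T$ is $\T$-rigid regardless of $\T$. If $\T = \emptyset$, then $R_\T = 0$, each residue class is a singleton, and $\T$-rigidity is automatic. In the remaining case, $\Pi$ non-simply laced and $\T \neq \emptyset$, I must exhibit a non-$\T$-rigid root: picking $\gamma \in \T$, one finds a root $\alpha \not\in \Pi_\T$ lying in a rank-$2$ subsystem containing $\gamma$ together with a second root of the form $\alpha + k\gamma$ of opposite length, both in the same residue class mod $R_\T$; by (ii), $\alpha$ is not $\T$-rigid. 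The main obstacle here is verifying the uniform existence of such a pair $(\alpha,\gamma)$, which reduces to checking the rank-$2$ cases $B_2 = C_2$ and $G_2$ (as illustrated in Example \ref{exemplog2} for $G_2$) and propagates to $B_n$, $C_n$, $F_4$ by selecting an appropriate rank-$2$ subsystem containing $\gamma$ and a root of the opposite length.
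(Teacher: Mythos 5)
Your items (i) and (ii) are sound and follow essentially the paper's own route. The telescoping identity $r_\gamma(\alpha^\vee)=\alpha^\vee-\langle\gamma,\alpha^\vee\rangle\,\gamma^\vee$ is correct and gives a self-contained substitute for the paper's appeal to the $W_\T$-invariance of the residue classes mod $R^\vee_\T$ (Proposition \ref{resid-invar} applied to $\Pi^\vee$, together with $W$-equivariance of duality); and, like the paper, you rightly defer the hard combinatorial implication ``$\T$-rigid $\Rightarrow$ equal lengths'' to the characterization proved in Section \ref{actionW} (Theorem \ref{propos3}), while the geometric reformulation via Proposition \ref{propos:isometria-esferas} is handled correctly.

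The genuine gap is in item (iii). Coincidence of the homotopy and invariant-geometry partitions is the \emph{two-sided} condition $\Pi_\T(\alpha)^\vee=\Pi^\vee_\T(\alpha^\vee)$, i.e.\ every root is $\T$-rigid \emph{and} every dual root is $\T^\vee$-rigid. In the remaining case ($\Pi$ not simply laced, $\T\neq\emptyset$) you only attempt to exhibit a non-$\T$-rigid root, via a pair $\alpha$, $\alpha+k\gamma$ of different lengths with $\gamma\in\T$. Such a pair exists only when $\T$ contains a \emph{short} simple root: if all roots of $\T$ are long, Proposition \ref{propos2} shows every nonzero residue class mod $R_\T$ consists of roots of a single length, so by Theorem \ref{propos3} every root is $\T$-rigid and your construction finds nothing — yet the partitions still fail to coincide, now because the reverse inclusion breaks. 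Concretely, in $B_2$ with long simple root $\alpha_1$, short simple root $\alpha_2$ and $\T=\{\alpha_1\}$, one has $(\alpha_1+2\alpha_2)^\vee=\alpha_1+2\alpha_2\equiv 2\alpha_2=\alpha_2^\vee \bmod R^\vee_\T$, so $S^2(\alpha_1+2\alpha_2)$ and $S^2(\alpha_2)$ are homotopic, while $\alpha_1+2\alpha_2\not\equiv\alpha_2\bmod R_\T$, so they have different invariant geometry. The fix is to dualize: when all roots of $\T$ are long, $\T^\vee$ consists of short roots of $\Pi^\vee$ and your string argument applies there, producing a non-$\T^\vee$-rigid dual root; this symmetric treatment is exactly how Corollary \ref{corol3} closes the case (``duality exchanges long and short simple roots''). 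A minor version of the same omission occurs in your simply-laced case, where you verify only $\T$-rigidity; there the dual system is again simply laced, so the reverse inclusion is immediate, but it should be said.
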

\begin{proof}
For item (i), Proposition \ref{propos1} gives that $W_\T$ acts transitively in the roots of same length in $\Pi_\T(\alpha)$, thus $\beta = w^*\alpha$ for some $w \in W_\T$.  Now Proposition \ref{resid-invar} gives the invariance of $\Pi_\T(\alpha)$ and $\Pi^\vee_\T(\alpha^\vee)$ by $W_\T = W_{\T^\vee}$.  Thus $\beta^\vee = (w \alpha)^\vee = w \alpha^\vee$, so that $\beta^\vee$ is in the same mod $R^\vee_\T$ class of $\alpha^\vee$.  Hence $S^2(\beta)$ has the same homotopy class as $S^2(\alpha)$, as claimed.

Item (ii) follows from Theorem \ref{propos3} items (ii) and (i), using Proposition \ref{propos:isometria-esferas} for the last conclusion.
Item (iii) is an immediate consequence of Corollary \ref{corol3}.
\end{proof}

For the flag manifolds of {\em simple} compact Lie groups, it follows that the invariant geometry classes of spheres coincide with their homotopy classes iff the root system is simply laced $A$, $D$, $E$ with arbitrary $\Theta$, thus any flag manifold of these types,  or of type $B$, $C$, $F$, $G$ with empty $\Theta = \emptyset$, thus only the maximal flag manifold of these types.

\section{Action of $W$ on the isotropy components}
\label{actionW}

The results of this section are stated and proved using purely the combinatorics of  root systems. We consider, more generally, nonreduced root system.
Recall the partition 
$$
\Pi \cup \{0\} = \Pi_0 \cup \Pi_1 \cup \cdots \cup \Pi_s
$$
into mod $R_\T$ residue classes considered in the last section, which depends solely on the root data $\Pi \supseteq \Pi_\T$, where $\Pi_0$ is the residue class of zero.  
Let $W_\T$ be the Weyl group of the root system $\Pi_\T$.  In this section we characterize when is $W_\T$ is transitive on each nonzero residue class.  It already leaves invariant each residue class mod $R_\T$.

\begin{proposition}
\label{resid-invar}
Each residue class of roots mod $R_\T$ is $W_\T$-invariant.
\end{proposition}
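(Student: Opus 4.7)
The plan is to reduce to the generating reflections of $W_\T$ and compute directly. Since $W_\T$ is generated by the simple reflections $r_\beta$, $\beta \in \T \subset \Pi_\T$ (equivalently, by all reflections $r_\beta$, $\beta \in \Pi_\T$), it suffices to show that for every $\beta \in \Pi_\T$ and every $\alpha \in \Pi \cup \{0\}$, the element $r_\beta \alpha$ lies in the same residue class mod $R_\T$ as $\alpha$.

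Next I would invoke the standard reflection formula
\[
r_\beta \alpha \;=\; \alpha - \prod{\alpha, \beta^\vee}\, \beta,
\]
where the Cartan integer $\prod{\alpha, \beta^\vee}$ is an integer. Since $\beta \in \Pi_\T$, we have $\beta \in R_\T$, and multiplying by an integer keeps us inside $R_\T$; thus
\[
r_\beta \alpha - \alpha \;=\; -\prod{\alpha, \beta^\vee}\, \beta \;\in\; R_\T,
\]
so $r_\beta \alpha \equiv \alpha \pmod{R_\T}$. Because $W$ permutes $\Pi$ and fixes $0$, the element $r_\beta \alpha$ stays inside $\Pi \cup \{0\}$, and by the displayed congruence it lies in the same class $\Pi_i$ as $\alpha$. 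Iterating over the generators proves invariance under all of $W_\T$.

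There is no real obstacle here: the argument is a one-line consequence of the reflection formula together with the fact that $R_\T$ is, by definition, the $\Z$-span of $\Pi_\T$. The only subtlety worth flagging is to make sure the partition is understood as being of $\Pi \cup \{0\}$ (both signs of roots), so that $r_\beta$, which may flip signs, still maps into the same partitioned set; this is automatic since $r_\beta$ permutes $\Pi$ and fixes $0$.
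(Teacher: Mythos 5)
Your proof is correct, and it takes a different route from the paper's. You argue directly on the generators: for $\beta \in \T$ the reflection formula $r_\beta \alpha = \alpha - \prod{\alpha,\beta^\vee}\,\beta$ shows that $r_\beta\alpha - \alpha$ is an integer multiple of $\beta \in R_\T$, hence the residue class is preserved, and $r_\beta$ permutes $\Pi \cup \{0\}$. The paper instead characterizes the residue classes as level sets of the restriction map $\phi \mapsto \phi|_{\t_\T}$, where $\t_\T$ is the annihilator of $\T$ in $\t$ from (\ref{eq:toro-isotropia}): two roots are congruent mod $R_\T$ iff they agree on $\t_\T$, and since $W_\T$ fixes $\t_\T$ pointwise, invariance follows for all of $W_\T$ at once. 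Your computation is more elementary and self-contained (it needs only the reflection formula and the definition of $R_\T$, and it sidesteps the small point, implicit in the paper's proof, that a root whose restriction to $\t_\T$ vanishes — i.e.\ a \emph{real} linear combination of $\T$ — actually lies in the integer lattice $R_\T$). The paper's formulation buys a conceptual picture that is reused later: the residue classes are precisely the fibers of restriction to $\t_\T$, which is the viewpoint underlying Siebenthal's description of the isotropy components. Either argument is acceptable here.
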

\begin{proof}
Recall the annihilator $\t_\Theta$ of $\Theta$ in $\t$ given by (\ref{eq:toro-isotropia}). Then $\phi \in \t^*$ is such that the restriction $\phi|_{\t_\Theta} = 0$ iff $\phi$ is a linear combination of roots in $\Theta$.  It follows that two roots satisfy $\alpha = \beta$ mod $R_\T$ iff their restrictions $\alpha|_{\t_\Theta} = \beta|_{\t_\Theta}$ coincide.   Since $W_\Theta$ centralizes $\t_\Theta$, it follows that each residue class mod $R_\T$ is invariant by $W_\Theta$. 
\end{proof}

Recall the duality bijection $\Pi \to \Pi^\vee$, $\alpha \mapsto \alpha^\vee = 2\alpha/\prod{\alpha,\alpha}$, which is in general not linear, even tough $(\alpha^\vee)^\vee = \alpha$.  It is only linear for the simply laced root systems.  We fix a subset of simple roots $\T \subset \Sigma$.  Recall that $W$ is generated by the simple reflections $r_\alpha$, $\alpha \in \Sigma$, while $W_\T$ is generated by the simple reflections $r_\alpha$, $\alpha \in \T$.  We have that the Weyl group of $\Pi_\Theta$ and its dual root system $\Pi^\vee_{\Theta}$ coincide $W_\Theta = W_{\Theta^\vee}$, since for the corresponding reflections we have $r_{\alpha} = r_{\alpha^\vee}$.   Since the Weyl group elements are isometries, we have that duality respects the Weyl group actions, that is, $(w\alpha)^\vee = w \alpha^\vee$ for all $w \in W$.  

Recall that the possible squared lengths of roots in a root system are 1, 2 and 4, called short, long and longer roots, respectively.  Simple roots are either short or long.  In a doubly laced root system, for a short root $\alpha$ we have that $\alpha^\vee = 2 \alpha$, for a long root $\alpha$ we have that $\alpha^\vee = \alpha$, and for a longer root we have that $\alpha^\vee = \alpha/2$.  For a triply laced root system, exchange 2 for 3 and exclude the longer roots. It follows that duality $\Sigma \to \Sigma^\vee$ exchanges short and long simple roots.

\begin{example}
The following picture illustrates the nonzero classes mod $R_\T$ for the non-reduced root system of type $BC_2$, only the positive roots are displayed.

\begin{center}
       \def\svgwidth{12cm}
\begingroup%
  \makeatletter%
  \providecommand\color[2][]{%
    \errmessage{(Inkscape) Color is used for the text in Inkscape, but the package 'color.sty' is not loaded}%
    \renewcommand\color[2][]{}%
  }%
  \providecommand\transparent[1]{%
    \errmessage{(Inkscape) Transparency is used (non-zero) for the text in Inkscape, but the package 'transparent.sty' is not loaded}%
    \renewcommand\transparent[1]{}%
  }%
  \providecommand\rotatebox[2]{#2}%
  \ifx\svgwidth\undefined%
    \setlength{\unitlength}{1102.7319755bp}%
    \ifx\svgscale\undefined%
      \relax%
    \else%
      \setlength{\unitlength}{\unitlength * \real{\svgscale}}%
    \fi%
  \else%
    \setlength{\unitlength}{\svgwidth}%
  \fi%
  \global\let\svgwidth\undefined%
  \global\let\svgscale\undefined%
  \makeatother%
  \begin{picture}(1,0.39551553)%
    \put(0.05356171,0.36711822){\color[rgb]{0,0,0}\makebox(0,0)[lb]{\smash{$BC_2\,\,{\rm mod}\,\,R_\Theta$}}}%
    \put(0,0){\includegraphics[width=\unitlength,page=1]{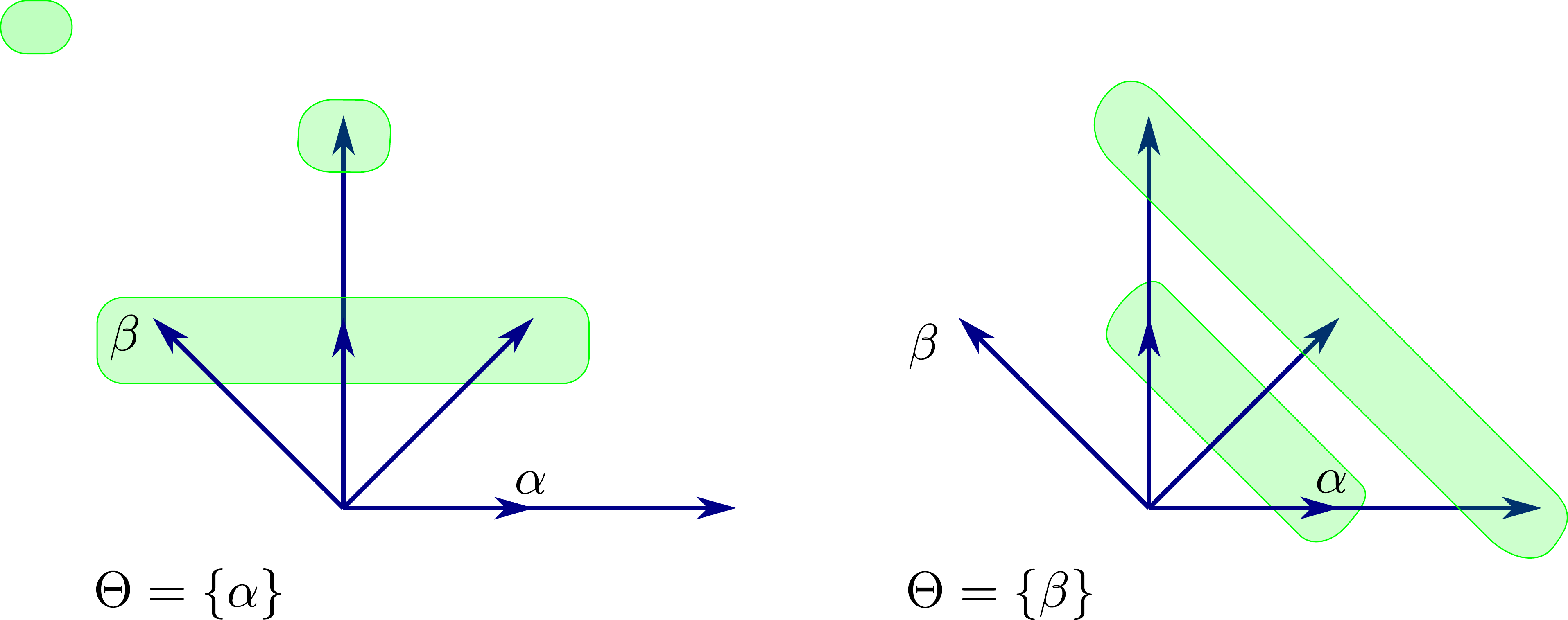}}%
  \end{picture}%
\endgroup%

\end{center}

\end{example}

A sequence of roots $\beta_1, \ldots, \beta_k$ connecting $\beta_1$ to $\beta_k$ is a $\Theta$-sequence when $\beta_{i+1} - \beta_i \in\Pi_\Theta$ $(i=1,2,\ldots,k-1)$. A subset of roots is $\Theta$-connected when all pair of roots in the subset can be connected by a $\Theta$-sequence. Siebenthal's fundamental result is that each nonzero residue class $\Pi_i$ is $\Theta$-connected (Proposition 2.1 of \cite{siebenthal}).  

What about the Weyl group of the root system, can it be used to connect the roots in each residue class? Surely no, if the residue class has roots of two distinct lengths.  Also no, for the residue class of zero, since the diagram of $\Theta$ can be disconnected.  Apart from that, we show in the next result that the answer is positive, by adapting Siebenthal's proof.


\begin{proposition}
\label{propos1}
$W_\Theta$ is transitive in the roots of same length in each nonzero residue class mod $R_\T$.
\end{proposition}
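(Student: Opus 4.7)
The plan is to prove the statement by induction on the length of a $\Theta$-sequence between $\alpha$ and $\beta$, mirroring Siebenthal's induction in the proof of Proposition 2.1 of \cite{siebenthal} but with a length-bookkeeping overlay. Fix $\alpha,\beta$ in the same nonzero residue class mod $R_\Theta$ with $|\alpha|=|\beta|$; Siebenthal's proposition supplies a $\Theta$-sequence $\alpha=\beta_0,\beta_1,\ldots,\beta_k=\beta$ with $\gamma_i:=\beta_i-\beta_{i-1}\in\Pi_\Theta$.

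The cornerstone is the following rigidity observation: a length-preserving step in a $\Theta$-sequence is automatically a reflection. Indeed, $|\beta_{i-1}+\gamma_i|^2=|\beta_{i-1}|^2$ expands to $\langle\beta_{i-1},\gamma_i^\vee\rangle=-1$, which gives $r_{\gamma_i}\beta_{i-1}=\beta_{i-1}+\gamma_i=\beta_i$. Consequently, once one exhibits a $\Theta$-sequence between $\alpha$ and $\beta$ whose every intermediate root has length $|\alpha|$, then $\beta=(r_{\gamma_k}\cdots r_{\gamma_1})\alpha\in W_\Theta\alpha$ and we are done.

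The bulk of the work is therefore to turn the given $\Theta$-sequence into a length-preserving one. In the inductive step, the easy case is $|\beta_1|=|\alpha|$: the first step is the reflection $r_{\gamma_1}$, and the inductive hypothesis applies to the $(k-1)$-step subsequence starting at $\beta_1$, whose endpoints have length $|\beta_1|=|\beta|$. In the length-changing case $|\beta_1|\neq|\alpha|$, one invokes the $\gamma_1$-string $\alpha-p\gamma_1,\ldots,\alpha+q\gamma_1$ through $\alpha$: the root $\alpha':=r_{\gamma_1}\alpha=\alpha-\langle\alpha,\gamma_1^\vee\rangle\gamma_1$ lies in the same residue class (by Proposition \ref{resid-invar}), has the same length as $\alpha$, and belongs to $W_\Theta\alpha$ by construction. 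One then argues that the minimal $\Theta$-distance from $\alpha'$ to $\beta$ is strictly smaller than $k$, so the inductive hypothesis yields $\beta\in W_\Theta\alpha'=W_\Theta\alpha$.

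The main obstacle is the degenerate subcase $\langle\alpha,\gamma_1\rangle=0$, where $r_{\gamma_1}$ fixes $\alpha$ so the string-jump does not produce a new element of $W_\Theta\alpha$. This is circumvented by reordering the $\Theta$-sequence to bring to the first position some $\gamma_j$ to which $\alpha$ is not orthogonal; such a $\gamma_j$ must exist because $\beta-\alpha$ is a nonzero vector in $R_\Theta$ whose inner product with some root of $\Pi_\Theta$ is nonzero, and $\Pi_i$ admits enough $\Theta$-connected rearrangements by Siebenthal. The delicate bookkeeping of this reordering, preserving the property that partial sums remain roots, is where one genuinely has to adapt Siebenthal's inductive machinery.
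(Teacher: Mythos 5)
Your reduction of length-preserving steps to reflections is correct ($|\beta_{i-1}+\gamma_i|=|\beta_{i-1}|$ forces $\langle\beta_{i-1},\gamma_i^\vee\rangle=-1$, hence $\beta_i=r_{\gamma_i}\beta_{i-1}$), but the argument breaks down exactly where you locate the difficulty, and the two patches you propose are not proofs. First, in the length-changing case you assert that the minimal $\Theta$-distance from $\alpha'=r_{\gamma_1}\alpha$ to $\beta$ is strictly smaller than $k$; no reason is given, and none is apparent --- reflecting $\alpha$ in the first step of one particular $\Theta$-sequence need not bring it closer to $\beta$ in your metric (the reflection $r_{\gamma_1}\alpha=\alpha-\langle\alpha,\gamma_1^\vee\rangle\gamma_1$ can even move $\alpha$ \emph{away} from $\beta$ when $\langle\alpha,\gamma_1\rangle>0$). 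Second, the degenerate subcase $\langle\alpha,\gamma_1\rangle=0$ is ``circumvented'' by an unspecified reordering of the $\Theta$-sequence whose existence (all partial sums must remain roots) you explicitly defer. As written, the induction has no well-defined decreasing quantity, so the proof is incomplete.

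The paper's proof supplies precisely the ingredient you are missing: a canonical choice of which reflection to apply, with a genuine descent invariant. It does not route through a $\Theta$-sequence at all. Instead, write $\beta_1-\beta_2=\sum_i n_i\alpha_i$ in the (linearly independent) simple roots $\alpha_i\in\Theta$. Because $\beta_1\neq\beta_2$ have the \emph{same length}, Cauchy--Schwarz gives $\langle\beta_1-\beta_2,\beta_1\rangle>0$, so some $j$ satisfies $n_j\langle\beta_1,\alpha_j\rangle>0$. Reflecting in that $\alpha_j$ preserves length and residue class and strictly decreases $|n_j|$ while leaving the other coefficients unchanged, so finitely many such reflections reach $\beta_2$. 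Note that the equal-length hypothesis is used exactly to guarantee, via Cauchy--Schwarz, that a ``good'' simple root to reflect in always exists --- this is what rescues the situation your degenerate subcase gets stuck on. If you want to salvage your approach, you should replace ``minimal $\Theta$-distance'' by the coefficient vector $(n_i)$ of $\beta-\alpha$ in $\Theta$ and prove the existence of a coefficient-decreasing reflection; at that point you will have reproduced the paper's argument.
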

\begin{proof}
Fix a possible length $L$ of the roots, $L^2 =$ 1, 2 or 4.  Let $\beta_1 \neq \beta_2$ be distinct roots of equal length $L$ which are $\Theta$-equivalent.  Since $\beta_1$ and $\beta_2$ are distinct and have that same length, the only possibility that they are proportional is that $\beta_1 = - \beta_2$. By Cauchy-Schwartz we have that
$$
\prod{\beta_1 - \beta_2, \beta_1} = |\beta_1|^2 - \prod{\beta_2, \beta_1} 
> |\beta_1|^2 - |\beta_2||\beta_1| = 0 
$$
Writing $\Theta = \{ \alpha_1, \ldots, \alpha_n \}$, we have that 
$\beta_1 - \beta_2 = \sum_i n_i \alpha_i$, for $n_i \in \Z$.  It follows that
$$
\prod{\beta_1 - \beta_2, \beta_1} =  \sum_i n_i \prod{\alpha_i,\beta_1} > 0
$$
We claim that there exists $n_j \neq 0$ such that $\prod{\beta_1,\alpha_j} \neq 0$ and has the same sign of $n_j$, that is, $n_j \prod{\beta_1,\alpha_j} > 0$.  Indeed, suppose not, then 
$n_j  \prod{\beta_1,\alpha_j} \leq 0$ for all $j$ and adding up we get $\prod{\beta_1 - \beta_2, \beta_1} =  \sum_i n_i \prod{\alpha_i,\beta_1} \leq 0$, a contradiction. 
We thus get the root
\begin{eqnarray*}
r_{\alpha_j}(\beta_1) & = & \beta_1 - \prod{\beta_1,\alpha_j^\vee} \alpha_j \\
& = & \beta_2 + n_1 \alpha_1 + \cdots + (n_j - \prod{\beta_1,\alpha^\vee})\alpha_j + \cdots 
\end{eqnarray*}
where the coefficient of $\alpha_j$ in the above root has absolute value strictly smaller than that of $n_j$, since $n_j$ and $\prod{\beta_1,\alpha^\vee}$ have the same sign.  Also note that $r_{\alpha_j}(\beta_1)$ has the same length $L$ as $\beta_1$ and lies in the same mod $R_\T$ residue class as $\beta_1$.  Proceeding inductively, the coefficients in $\Theta$ decrease strictly and in a finite number of steps we get roots $\alpha_j, \ldots, \alpha_k$ in $\Theta$ such that  
$$
r_{\alpha_k} \cdots r_{\alpha_j} (\beta_1) = \beta_2 
$$
where $r_{\alpha_k} \cdots r_{\alpha_j}  \in W_\Theta$.
\end{proof}

It follows that a necessary and sufficient criterion for $W_\Theta$ to be transitive in a nonzero residue class mod $R_\T$ is that the roots of this class have the same length.  Next, we give some criteria to check this.  First, a long root criterion.
Note that, since $\Theta$ is a subset of simple roots, it can only contain short or long roots.

\begin{lemma}
\label{lemma2}
Let $\alpha$ be a short simple root. Then there exists a long root $\phi$ in $\Pi$ such that $\prod{\alpha, \phi} \neq 0$.
\end{lemma}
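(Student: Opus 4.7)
The plan is to reduce to an irreducible component of $\Pi$ containing $\alpha$ and then exploit the fact that the Weyl group acts irreducibly on the ambient space to force the long roots to span everything.

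First I would restrict attention to the irreducible component $\Pi' \subseteq \Pi$ containing the short simple root $\alpha$. Roots in different irreducible components are mutually orthogonal, so any long root $\phi \in \Pi'$ with $\prod{\alpha,\phi} \neq 0$ works equally well inside $\Pi$. Moreover, for ``short'' to be a meaningful designation for $\alpha$, the component $\Pi'$ must contain roots strictly longer than $\alpha$, so $\Pi'$ is non-simply-laced and does contain long roots.

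Next, I would look at the linear span $V$ of all long roots of $\Pi'$ inside the ambient inner product space. Since the Weyl group $W_{\Pi'}$ acts by isometries, it permutes the set of roots of each fixed length; in particular the set of long roots is $W_{\Pi'}$-invariant, so $V$ is a $W_{\Pi'}$-invariant subspace. Because $\Pi'$ is irreducible, $W_{\Pi'}$ acts irreducibly on the ambient space, so $V$ is either zero or the whole space; as $\Pi'$ contains at least one long root, $V$ is the whole ambient space. Then $\alpha \neq 0$ cannot be orthogonal to a spanning set, so some long root $\phi$ satisfies $\prod{\alpha,\phi} \neq 0$.

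The main subtlety I would watch for is the nonreduced setting (e.g.\ $BC_n$), where besides short and long roots one also has ``longer'' roots of squared length $4$; but the argument is uniform because $W_{\Pi'}$ preserves each length class separately, so the span of the squared-length-$2$ roots alone is still $W_{\Pi'}$-invariant and irreducibility of the $W_{\Pi'}$-action forces this span to be everything. No delicate case-by-case check of the classification is needed beyond knowing that in each irreducible non-simply-laced type the set of long roots is nonempty, which is immediate.
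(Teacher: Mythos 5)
Your argument is correct, but it is genuinely different from the one in the paper. The paper proceeds constructively along the Dynkin diagram: starting from the short simple root $\alpha$ it picks a chain of short simple roots $\alpha=\alpha_0,\alpha_1,\dots,\alpha_n$ leading to a long simple root $\beta$ in the same component, and applies the successive simple reflections to produce the explicit long root $\phi=\beta-k(\alpha_n+\dots+\alpha_1+\alpha)$ with $\prod{\alpha,\phi^\vee}=-k>0$. Your route replaces this diagram walk by a soft invariance argument: the set of long roots is stable under $W_{\Pi'}$, so its span is a nonzero $W_{\Pi'}$-invariant subspace, and irreducibility of the reflection representation of an irreducible root system forces that span to be all of $\mathrm{span}(\Pi')$, which contains $\alpha\neq 0$. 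Both proofs are valid; yours is shorter and treats the nonreduced case $BC_n$ with no extra effort, at the cost of invoking the (standard, but not recalled in the paper) irreducibility of the $W$-action and of being non-constructive, whereas the paper's proof exhibits $\phi$ explicitly and even pins down the sign of $\prod{\alpha,\phi^\vee}$. Two small points of care: ``the whole space'' in your irreducibility step must mean $\mathrm{span}(\Pi')$, not the full Cartan subalgebra (which may be larger); and the claim that the component of a short root necessarily contains a long root rests on the per-component length normalization, an assumption the paper's own proof also makes implicitly when it asserts that the connected component of $\alpha$ in the Dynkin diagram contains a long simple root.
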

\begin{proof}
Since $\alpha$ is short simple, its connected component on the Dynkin diagram of $\Pi$ contains a long simple root. It follows that $\alpha$ must is connected to a long simple root $\beta$, more precisely, there exists a sequence of short simple roots $ \alpha = \alpha_0,\, \alpha_1,\, \ldots,\, \alpha_n$ such that $\prod{\alpha_i, \alpha_{i+1}^\vee} = -1$ and $k = \prod{\alpha_n,\beta^\vee} < 0$.  If $n=0$ then $\prod{\alpha,\beta^\vee} < 0$ 
and we are done.  
Else, denote by $r_i$ the reflection on the simple root $\alpha_i$ and consider
$$
\begin{array}{clll}
\phi_0 & = r_n(\beta) & = \beta - k \alpha_n \\
\phi_1 & = r_{n-1}(\phi_0) & = \beta - k( \alpha_n + \alpha_{n-1}) \\
\phi_2 & = r_{n-2}(\phi_1) & = \beta - k( \alpha_n + \alpha_{n-1} + \alpha_{n-2}) \\
\cdots \\
\phi_n & = r_{n}(\phi_{n-1}) & = \beta - k( \alpha_n + \ldots + \alpha_1 + \alpha ) 
\end{array}
$$
Then $\phi = \phi_n$ is a long root, since it is the image of $\beta$ by the Weyl group. Thus $\phi^\vee = \phi$ and then
$$
\prod{\alpha,\phi^\vee} = -k( \prod{\alpha, \alpha_1} + \prod{\alpha, \alpha} ) = -k( -1 + 2 ) = -k > 0
$$
since, by construction, $\alpha$ is orthogonal to $\beta,\, \alpha_n,\, \ldots, \alpha_2$.  This proves our claim.
\end{proof}

\begin{proposition}
\label{propos2}
The following are equivalent:
\begin{enumerate}[(i)]
\item $W_\Theta$ is transitive in each nonzero residue classes mod $R_\T$.
\item Each nonzero residue classes mod $R_\T$ has roots of the same length.
\item The roots in $\Theta$ are long.
\end{enumerate}
\end{proposition}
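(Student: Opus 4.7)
The plan is to close the loop (iii) $\Rightarrow$ (ii) $\Rightarrow$ (i) $\Rightarrow$ (iii), where the main content is the equivalence (ii) $\iff$ (iii); the equivalence (i) $\iff$ (ii) is essentially immediate. Indeed, elements of $W_\Theta$ are isometries, so a transitive $W_\Theta$-orbit of roots lies in a single length class, giving (i) $\Rightarrow$ (ii); conversely, Proposition \ref{propos1} is exactly the statement that $W_\Theta$ is transitive on each set of equal-length roots in a nonzero residue class, so if the whole class has a single length, then $W_\Theta$ is transitive on it.

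For (iii) $\Rightarrow$ (ii), I would invoke Siebenthal's theorem on $\Theta$-connectedness of each nonzero residue class mod $R_\T$ (cited before Proposition \ref{propos1}). Given two roots in a class, pick a $\Theta$-sequence $\beta_1, \ldots, \beta_k$ joining them, with $\gamma_i := \beta_{i+1} - \beta_i \in \Pi_\Theta$. Under hypothesis (iii) each $\gamma_i$ is long, so it is enough to prove the one-step claim: if $\beta$ and $\beta + \gamma$ are both roots and $\gamma$ is long, then $|\beta| = |\beta + \gamma|$. This reduces to a short arithmetic check on the $\gamma$-string through $\beta$: writing $|\beta + \gamma|^2 = |\beta|^2 + |\gamma|^2 + 2\prod{\beta, \gamma}$ and using that the Cartan integer $\prod{\beta, \gamma^\vee} = p - q$ takes only very restricted values (by the classical bound on Cartan integers), one checks that the unique value of $p - q$ for which $|\beta + \gamma|^2$ lies in the set of admissible squared root-lengths forces $|\beta + \gamma|^2 = |\beta|^2$.

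For (ii) $\Rightarrow$ (iii), I would argue by contrapositive. Assume some $\alpha \in \Theta$ is short. Lemma \ref{lemma2} furnishes a long root $\phi$ with $\prod{\alpha, \phi} \neq 0$, and after replacing $\phi$ by $-\phi$ I may assume $\prod{\phi, \alpha^\vee} < 0$. For a long--short pair of roots, the only nonzero values of $\prod{\phi, \alpha^\vee}$ compatible with both being roots are the extreme ones ($-2$ in the doubly laced case, $-3$ in the triply laced case $G_2$), so the $\alpha$-string through $\phi$ reaches $\phi + \alpha$, and the identity $|\phi + \alpha|^2 = |\phi|^2 + 2\prod{\phi, \alpha} + |\alpha|^2$ gives $|\phi + \alpha|^2 < |\phi|^2$. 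Since $\alpha \in \Theta \subset R_\T$, the roots $\phi$ and $\phi + \alpha$ sit in a common residue class mod $R_\T$ while having different lengths, contradicting (ii).

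The main obstacle is organizing the string-arithmetic uniformly across the three lacing regimes (doubly laced $B, C, F$; triply laced $G_2$; and the nonreduced setting $BC$ that the section explicitly admits), since the admissible Cartan integers and squared root-lengths differ. The key structural input that makes the argument go through without a lengthy case split is that, between a long and a short root, the nonzero Cartan integer is forced to its extreme value, so in both directions the length comparison collapses to the single computation indicated above.
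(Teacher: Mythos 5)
Your overall architecture agrees with the paper's on (i) $\Rightarrow$ (ii) and on (ii) $\Rightarrow$ (iii): the paper also argues the latter by contrapositive, producing via Lemma \ref{lemma2} a long root $\phi$ whose $\alpha$-string is nonempty and asserting in one line that this string mixes lengths; your Cartan-integer computation just makes that assertion explicit. Where you genuinely diverge is the remaining implication. The paper proves (iii) $\Rightarrow$ (i) directly by deferring to Lemma 4.3 of \cite{mauro-luiz}, whereas you prove (iii) $\Rightarrow$ (ii) from Siebenthal's $\Theta$-connectedness plus a one-step length lemma and then recover (i) from Proposition \ref{propos1}. Your route is more self-contained and is the natural way to close the cycle given that Proposition \ref{propos1} is already available.

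However, your one-step claim --- if $\beta$ and $\beta+\gamma$ are roots and $\gamma$ is long then $|\beta|=|\beta+\gamma|$ --- fails precisely in the nonreduced case you undertake to cover. In $BC_n$ take $\beta=e_i+e_j$ and $\gamma=e_i-e_j$, both long (squared length $2$); then $\langle\beta,\gamma^\vee\rangle=0$ and $\beta+\gamma=2e_i$ is a root of squared length $4$. Your arithmetic $|\beta+\gamma|^2=|\beta|^2+|\gamma|^2(n+1)$ does not force $n=-1$ here, because $4$ is an admissible squared length in $BC_n$. Moreover this is not a repairable oversight: with $\Theta=\{e_1-e_2\}$ in $BC_2$ (a long simple root), the residue class of $e_1+e_2$ mod $R_\T$ is $\{e_1+e_2,\,2e_1,\,2e_2\}$, which contains roots of two lengths while $W_\T$ fixes $e_1+e_2$ and only permutes $2e_1,2e_2$; so (iii) holds while (ii) and (i) fail, and the implication itself breaks down in the nonreduced setting. (This also affects the statement as printed, whose proof of (iii) $\Rightarrow$ (i) is outsourced to \cite{mauro-luiz}.) For reduced root systems your string arithmetic is correct in both lacing regimes and the proof is complete; you should either restrict to reduced systems or explicitly exclude the ``longer'' roots of $BC$ from the hypothesis.
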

\begin{proof}
It is immediate that (i) implies (ii).  
To prove that (ii) implies (iii), suppose by contradiction that $\Theta$ contains a short root $\alpha$. Then by the previous Lemma there exists a long root $\phi$ such that 
the $\alpha$-string of roots through $\phi$ is non-empty.  Since $\phi$ is long and $\alpha$ is short, this $\alpha$-string contains both long and short roots, and since $\alpha \in \Theta$, this $\alpha$-string is contained in the the residue class of $\phi$ mod $R_\T$, which contradicts (ii).

To prove that (iii) implies (i), proceed as in 
Lemma 4.3, Item 1, of \cite{mauro-luiz}.
\end{proof}

\begin{remark}
\label{remark:simply-laced}
It follows that for simply laced root systems, which contain only long roots, $W_\Theta$ is always transitive in the nonzero residue classes mod $R_\T$, for arbitrary $\Theta$.
\end{remark}

Now we characterize transitivity in each residue class by using duality.  First, an example that will illustrate our next result.

\begin{example}
\label{exemplog2}
The following picture illustrates the nonzero classes mod $R_\T$ for the root system of type 
$G_2$, the nonzero classes mod $R^\vee_\T$  of its (isomorphic) dual $G_2^\vee$ and also the image of these residue classes under duality, for $\Theta = \{\alpha\}$.  Only the positive roots are displayed.

\begin{center}
       \def\svgwidth{13.5cm}
\begingroup%
  \makeatletter%
  \providecommand\color[2][]{%
    \errmessage{(Inkscape) Color is used for the text in Inkscape, but the package 'color.sty' is not loaded}%
    \renewcommand\color[2][]{}%
  }%
  \providecommand\transparent[1]{%
    \errmessage{(Inkscape) Transparency is used (non-zero) for the text in Inkscape, but the package 'transparent.sty' is not loaded}%
    \renewcommand\transparent[1]{}%
  }%
  \providecommand\rotatebox[2]{#2}%
  \ifx\svgwidth\undefined%
    \setlength{\unitlength}{1381.23788078bp}%
    \ifx\svgscale\undefined%
      \relax%
    \else%
      \setlength{\unitlength}{\unitlength * \real{\svgscale}}%
    \fi%
  \else%
    \setlength{\unitlength}{\svgwidth}%
  \fi%
  \global\let\svgwidth\undefined%
  \global\let\svgscale\undefined%
  \makeatother%
  \begin{picture}(1,0.35314122)%
    \put(0.0443698,0.33001398){\color[rgb]{0,0,0}\makebox(0,0)[lb]{\smash{$G_2\,\,{\rm mod}\,\,R_\Theta$}}}%
    \put(0,0){\includegraphics[width=\unitlength,page=1]{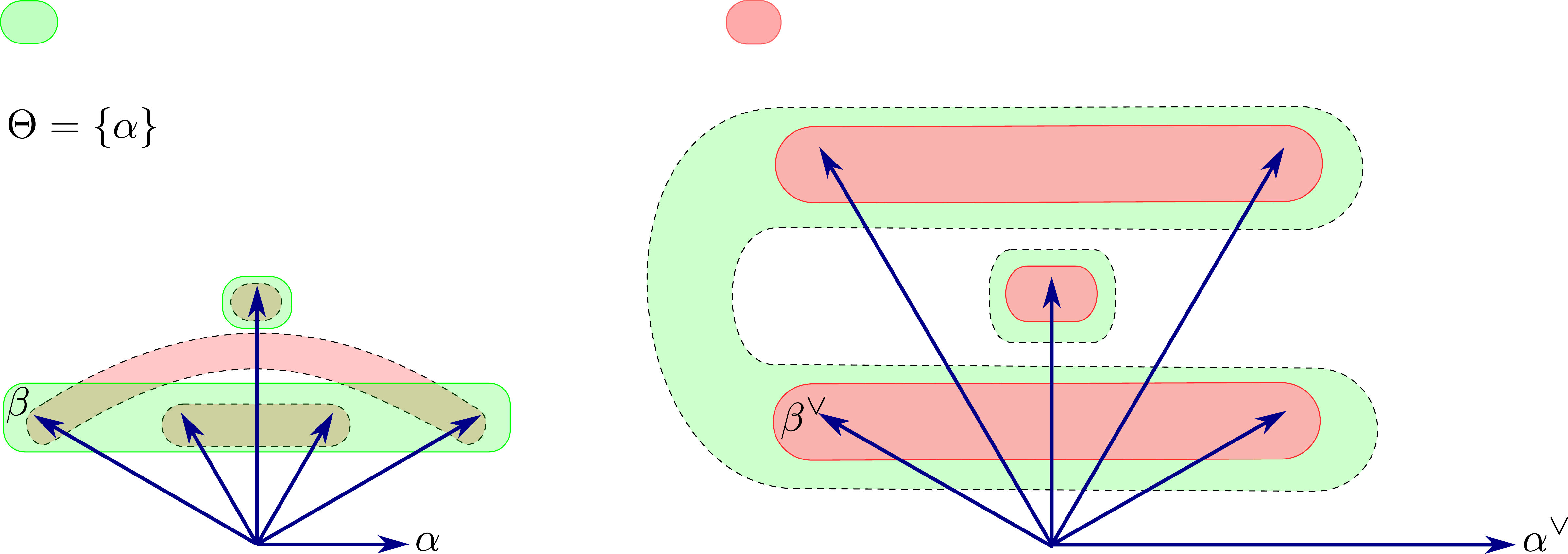}}%
    \put(0.5065639,0.33001398){\color[rgb]{0,0,0}\makebox(0,0)[lb]{\smash{$G_2^\vee\,\,{\rm mod}\,\,R^\vee_\Theta$}}}%
  \end{picture}%
\endgroup%

\end{center}

Note that $W_\Theta$ is not transitive in one of the two mod $R_\T$ classes and that their images by duality in the dual $G_2^\vee$ are not contained in the mod $R_\T^\vee$ classes. 
In the dual $G_2^\vee$, where $\alpha^\vee$ takes the role of the long root, note that $W_\Theta$ is now transitive in the three classes mod $R_\T^\vee$ and that their image by duality in the original $G_2$ are contained in the two mod $R_\T$ classes.  
\end{example}


\begin{theorem}
\label{propos3}
Given a root $\alpha \not\in \Pi_\T$, the following are equivalent:
\begin{enumerate}[(i)]
\item $W_\Theta$ is transitive in the residue class of $\alpha$ mod $R_\T$.
\item $\Pi_\T(\alpha)^\vee \subseteq \Pi^\vee_\T(\alpha^\vee)$.
\item The roots in $\Pi_\T(\alpha)$ have the same length.
\end{enumerate}
\end{theorem}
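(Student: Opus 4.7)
The plan is to prove the equivalences cyclically as (iii)$\Rightarrow$(i)$\Rightarrow$(ii)$\Rightarrow$(iii). The implication (iii)$\Rightarrow$(i) is immediate from Proposition \ref{propos1}, which already gives that $W_\Theta$ is transitive on the subset of roots of each fixed length inside a residue class mod $R_\T$: if the class $\Pi_\T(\alpha)$ is monochromatic in length, that subset exhausts the class.

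For (i)$\Rightarrow$(ii), the key ingredient is that duality $\phi\mapsto\phi^\vee$ is $W$-equivariant (since it is induced by the $W$-invariant inner product, any $w\in W$ is an isometry and $(w\phi)^\vee = 2w\phi/|w\phi|^2 = w\phi^\vee$) and that $W_\Theta = W_{\Theta^\vee}$. Given $\beta\in\Pi_\T(\alpha)$ and a word $w = r_{\gamma_1}\cdots r_{\gamma_k}\in W_\Theta$ with $\beta = w\alpha$, equivariance gives $\beta^\vee = w\alpha^\vee$. A direct calculation starting from $r_\gamma(\alpha)=\alpha-\langle\alpha,\gamma^\vee\rangle\gamma$ and taking duals (using $|r_\gamma(\alpha)|=|\alpha|$ together with $\gamma=(|\gamma|^2/2)\gamma^\vee$) yields
$$
r_\gamma(\alpha^\vee)\;=\;\alpha^\vee\;-\;\langle\gamma,\alpha^\vee\rangle\,\gamma^\vee,
$$
where $\langle\gamma,\alpha^\vee\rangle\in\Z$ is a Cartan integer, so $r_\gamma(\alpha^\vee)-\alpha^\vee \in \Z\gamma^\vee\subseteq R^\vee_\T$. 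Telescoping along the simple reflections $r_{\gamma_i}$ appearing in $w$ then delivers $\beta^\vee-\alpha^\vee\in R^\vee_\T$, i.e.\ $\beta^\vee\in\Pi^\vee_\T(\alpha^\vee)$, which is (ii).

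The heart of the theorem is (ii)$\Rightarrow$(iii), which I would prove via the quotient projection $\pi\colon\h^*\to\h^*/\operatorname{span}_\R(\Theta)$. Since each $\gamma^\vee$ is a positive scalar multiple of $\gamma$, both $R_\T$ and $R^\vee_\T$ sit inside $\ker\pi$, and for any root $\phi$ one has $\pi(\phi^\vee)=\tfrac{2}{|\phi|^2}\pi(\phi)$. If $\beta\in\Pi_\T(\alpha)$, congruence mod $R_\T$ gives $\pi(\alpha)=\pi(\beta)$; if (ii) also holds, then $\pi(\alpha^\vee)=\pi(\beta^\vee)$, yielding
$$
\tfrac{2}{|\alpha|^2}\pi(\alpha)\;=\;\tfrac{2}{|\beta|^2}\pi(\alpha).
$$
The hypothesis $\alpha\notin\Pi_\T$ ensures $\alpha\notin R_\T$ (the roots lying in $R_\T$ are exactly those of $\Pi_\T$, valid also in the nonreduced setting used in Section \ref{actionW}), so $\pi(\alpha)\neq 0$ and hence $|\alpha|=|\beta|$, proving (iii). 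The main obstacle I anticipate is the bookkeeping in the reflection formula above: one must switch between $\gamma$ and $\gamma^\vee$ at precisely the right moment to land inside $R^\vee_\T$ with integer coefficients. Beyond that, the projection argument is purely formal and handles the reduced and nonreduced cases uniformly, so no extra case analysis is needed.
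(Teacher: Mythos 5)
Your proposal is correct. The implications (iii)$\Rightarrow$(i) and (i)$\Rightarrow$(ii) follow the same route as the paper: the former is a direct appeal to Proposition \ref{propos1}, and the latter rests on $W$-equivariance of duality together with $W_\T$-invariance of the mod $R^\vee_\T$ classes --- the paper quotes Proposition \ref{resid-invar} for the dual system where you instead verify the invariance by hand via the reflection formula $r_\gamma(\alpha^\vee)=\alpha^\vee-\langle\gamma,\alpha^\vee\rangle\gamma^\vee$, which is a correct (and self-contained) substitute. Where you genuinely diverge is in (ii)$\Rightarrow$(iii). The paper argues by contradiction through an exhaustive case analysis on the lengths of $\alpha$ and $\alpha+\gamma$ (short/long/longer, doubly versus triply laced, with the extra cases needed for nonreduced systems), each case ending with $\alpha\in\Pi_\T$. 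Your projection $\pi\colon\h^*\to\h^*/\mathrm{span}_\R(\T)$ replaces all of this by one line: both $R_\T$ and $R^\vee_\T$ lie in $\ker\pi$ because each $\gamma^\vee$ is a positive multiple of $\gamma$, so congruence mod $R_\T$ gives $\pi(\alpha)=\pi(\beta)$ while (ii) gives $\tfrac{2}{|\alpha|^2}\pi(\alpha)=\tfrac{2}{|\beta|^2}\pi(\alpha)$, and $\pi(\alpha)\neq 0$ (since a root in $\mathrm{span}_\R(\T)$ has vanishing coefficients on the simple roots outside $\T$ and hence lies in $\Pi_\T$, exactly the observation the paper also makes at the start of its case analysis) forces $|\alpha|=|\beta|$. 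This is shorter, uniform across reduced and nonreduced systems, and isolates the real reason the implication holds --- the only information duality adds transversally to $\T$ is the length factor $2/|\phi|^2$; what it loses relative to the paper's proof is the explicit display of how the failure manifests in each laced type, which the paper then illustrates in Example \ref{exemplog2}.
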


\begin{proof}
To prove that (i) implies (ii), first note that transitivity implies that the residue class of $\alpha$ is given by the orbit $W_\Theta \alpha = \Pi_\T(\alpha)$.  For the orbit we have
$$
(W_\Theta \alpha)^\vee = W_\Theta \alpha^\vee \subseteq \Pi^\vee_\T(\alpha^\vee)
$$
since duality respects the Weyl group action and since the mod $R_\T^\vee$ classes in $\Pi^\vee$ are $W_\Theta$-invariant.

To prove that (ii) implies (iii) first note that if a nonzero multiple $k \alpha$ belongs to the $\Pi_\T$, then $\alpha \in \Pi_\T$.  In fact, write $\alpha = \sum n_i \alpha_i$ for integers $n_i$ and simple roots $\alpha_i$, so that by linear independence of the simple roots $k\alpha = \sum_i kr_i \alpha_i \in \Pi_\T$ implies that $kr_i = 0$ for $\alpha_i \not\in \Theta$.  It follows that $r_i = 0$ for $\alpha_i \not\in \T$ and thus $\alpha \in \Pi_\T$.
Now, suppose that the residue class of $\alpha$ has roots of different lengths. Then, there exists a root
$\alpha + \gamma$  with length different from the length of $\alpha$, with $\gamma \in R_\T$.
We have the following possibilities.
\begin{enumerate}[(a)]
\item[] Suppose first that the root system is doubly laced.

\item $\alpha$ is long and $\alpha + \gamma$ is short:  it follows that
$(\alpha + \gamma)^\vee = 2 \alpha + 2 \gamma$ and $\alpha^\vee = \alpha$.
But from (ii) we get that 
$(\alpha + \gamma)^\vee = \alpha^\vee + \phi$, for some $\phi \in R^\vee_\T$, so that
$$
2 \alpha + 2 \gamma = \alpha + \phi
\quad
\text{and thus}
\quad
\alpha = \phi - 2\gamma
$$
Write $\gamma = \sum_i r_i \alpha_i$, with $r_i \in \Z$, $\alpha_i \in \T$.  Then $2\gamma = \sum_i r_i 2\gamma_i = \sum_i s_i \gamma^\vee_i$, where $s_i = r_i$ if $\gamma_i$ is short and $s_i = 2 r_i$ if $\gamma_i$ is long.  This shows that $2\gamma \in   R^\vee_\T$ and thus $\alpha \in R^\vee_\T$.  But then $\alpha^\vee = \alpha \in \Pi^\vee_\T \cap \Pi_\T$, so that $\alpha \in \Pi_\T$, a contradiction.

\item $\alpha$ is short and $\alpha + \gamma$ is long: it follows that
$(\alpha + \gamma)^\vee = \alpha + \gamma$ and $\alpha^\vee = 2\alpha$.
Again from (ii) we get that
$$
\alpha + \gamma = 2\alpha + \phi
\quad
\text{and thus}
\quad
\alpha = \phi - \gamma
$$
for some $\phi \in R^\vee_\T$.
Write $\phi = \sum_i r_i \alpha^\vee_i$, with $r_i \in \Z$, $\alpha_i \in \T$.
Since for the simple roots we have that $\alpha^\vee_i$ is either $\alpha_i$ or $2\alpha_i$, it follows that $\phi$ lies in $R_\T$. This shows that the root $\alpha$ lies in $R_\T$ and thus in $\Pi_\T$,  a contradiction.

\item[]
If the root system is triply laced we can replace 2 by 3 in the previous arguments and get that
$2 \alpha \in R_\T$.  By the above equation it follows that $\alpha  \in \Pi_\T$, a contradiction.

The next possibilities can only happen for doubly laced root systems:

\item $\alpha$ is long and $\alpha + \gamma$ is longer: it follows that
$(\alpha + \gamma)^\vee = (\alpha + \gamma)/2$ and $\alpha^\vee = \alpha$.
Again from (ii) we get that 
$$
(\alpha + \gamma)/2 = \alpha + \phi
\quad
\text{and thus}
\quad
\alpha = 2\phi - \gamma
$$
for some $\phi \in R^\vee_\T$.
As in item (b), we have that $\phi$ lies in $R_\T$.
This shows that the root $\alpha$ lies in $R_\T$ and thus in $\Pi_\T$,  a contradiction.

\item $\alpha$ is short and $\alpha + \gamma$ is longer: it follows that
$(\alpha + \gamma)^\vee = (\alpha + \gamma)/2$ and $\alpha^\vee = 2\alpha$.
Again from (ii) we get that 
$$
(\alpha + \gamma)/2 = 2\alpha + \phi
\quad
\text{and thus}
\quad
3\alpha = 2\phi - \gamma
$$
As in item (b) we have that $\phi \in R_\T$ which shows that $3\alpha \in R_\T$.  By the above equation this implies that the root $\alpha$ lies in $R_\T$ and thus in $\Pi_\T$,  a contradiction.
\end{enumerate}

That (iii) implies (i) follows directly from Proposition \ref{propos1}.
\end{proof}

The next result characterizes full transitivity by duality.

\begin{corollary}
\label{corol3}
The following are equivalent:
\begin{enumerate}[(i)]
\item $W_\Theta$ is transitive in each nonzero residue classes mod $R_\T$ and mod $R_\T^\vee$
\item $\Pi_\T(\alpha)^\vee = \Pi^\vee_\T(\alpha^\vee)$ for all roots $\alpha$.
\item Either the root system is simply laced and $\Theta$ is arbitrary or else the root system is not simply laced and $\Theta = \emptyset$. 
\end{enumerate}
\end{corollary}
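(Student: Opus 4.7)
The plan is to deduce the Corollary from the two results immediately preceding it (Proposition \ref{propos2} and Theorem \ref{propos3}) by applying them not only to $\Pi$ with the subset $\T$, but also to the dual root system $\Pi^\vee$ with the subset $\T^\vee$. The key structural fact is that condition (i) is symmetric in $\Pi$ and $\Pi^\vee$, since $W_\T=W_{\T^\vee}$ and the role of the mod $R_\T^\vee$ classes in $\Pi$ is played by the mod $R_{\T^\vee}$ classes in $\Pi^\vee$ under the duality bijection $\alpha\leftrightarrow\alpha^\vee$.

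For the equivalence (i)$\Leftrightarrow$(ii), I would apply Theorem \ref{propos3}, equivalence (i)$\Leftrightarrow$(ii), to every root $\alpha\notin\Pi_\T$ to obtain $\Pi_\T(\alpha)^\vee\subseteq\Pi^\vee_\T(\alpha^\vee)$, and then apply the same equivalence to the dual root system to obtain the reverse inclusion $\Pi^\vee_\T(\alpha^\vee)^\vee\subseteq\Pi_\T(\alpha)$; dualizing and using $(\alpha^\vee)^\vee=\alpha$ converts the second into the missing inclusion $\Pi^\vee_\T(\alpha^\vee)\subseteq\Pi_\T(\alpha)^\vee$.

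For (iii)$\Rightarrow$(i), if $\Pi$ is simply laced then Remark \ref{remark:simply-laced} gives transitivity on the mod $R_\T$ classes for arbitrary $\T$; since the dual of a simply laced system is again simply laced, the same remark applied to $\Pi^\vee$ yields transitivity on the mod $R_\T^\vee$ classes. If instead $\T=\emptyset$, each nonzero residue class mod $R_\T$ and mod $R_\T^\vee$ is a single root, so transitivity is automatic. Conversely, for (i)$\Rightarrow$(iii) assume (i) and suppose $\Pi$ is not simply laced. Applying Proposition \ref{propos2}, equivalence (i)$\Leftrightarrow$(iii), to $\Pi$ forces every root of $\T$ to be long in $\Pi$; applying the same equivalence to $\Pi^\vee$ forces every root of $\T^\vee$ to be long in $\Pi^\vee$, equivalently every root of $\T$ to be short in $\Pi$. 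In a non simply laced root system no root is simultaneously short and long, so these conditions together force $\T=\emptyset$.

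I expect no serious obstacle here, since the whole argument is a bookkeeping application of the previous two results together with duality; the only point requiring a little care is to verify that the condition in (i) really is invariant under $\Pi\leftrightarrow\Pi^\vee$ (so that Proposition \ref{propos2} and Theorem \ref{propos3} can legitimately be applied to the dual side), and that the corner cases of simply laced $\Pi$ and of empty $\T$ in (iii) do not fall outside the hypotheses of those results.
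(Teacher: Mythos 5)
Your proposal is correct and follows essentially the same route as the paper: the equivalence (i)$\Leftrightarrow$(ii) by applying Theorem \ref{propos3} to both $\Pi$ and $\Pi^\vee$ and dualizing one inclusion, and the equivalence with (iii) via the long-root criterion of Proposition \ref{propos2} applied on both sides together with the fact that duality exchanges short and long simple roots. The points you flag for care (symmetry of condition (i) under duality, the trivial case $\T=\emptyset$) are exactly the ones the paper's proof relies on.
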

\begin{proof}
Item (i) implies, by the previous Lemma, that 
$\Pi_\T(\alpha)^\vee \subseteq \Pi^\vee_\T(\alpha^\vee)$ and
$\Pi^\vee_\T(\alpha^\vee)^\vee \subseteq \Pi_\T(\alpha)$, for all roots $\alpha$.
 Applying the duality map to the last equation we get 
 $\Pi^\vee_\T(\alpha^\vee)
  \subseteq \Pi_\T(\alpha)^\vee$ and thus get item (ii).
Again by the previous Lemma, item (ii) clearly implies item (i).

To prove that (i) implies (iii) use the long root criterion (Proposition \ref{propos2}) so that (i) implies that the roots in both $\Theta$ and $\Theta^\vee$ are long.
Since duality exchanges long and short roots this can only happen if $\Theta$ is empty or if the root system is simply laced and does not contain short roots. 

Item (iii) implies (i) by Remark \ref{remark:simply-laced}, when the root system is simply laced. When $\Theta = \emptyset$ then (i) follows trivially since in this case $W_\Theta = \{1\}$ and the classes mod $R_\T$ and $R^\vee_\T$ are singletons.
\end{proof}

\end{document}